\def\R{{\mathbb {R}}}
\newcommand{\p}{\partial}
\newcommand{\defeq}{\mathrel{\mathop:}=}
\newtheorem{theorem}{Theorem}[section]
\newtheorem{lemma}[theorem]{Lemma}
\newtheorem{corollary}[theorem]{Corollary}
\theoremstyle{definition}
\newtheorem{definition}[theorem]{Definition}
\newtheorem{example}[theorem]{Example}
\theoremstyle{remark}
\newtheorem{remark}[theorem]{Remark}
\def \dist {\mathrm{dist}}
\numberwithin{equation}{section}
\begin{document}

\title[Parabolic problems with strong absorption]
{Regularity for degenerate evolution equations with strong absorption}

\author[Jo\~{a}o Vitor da Silva, Pablo Ochoa and Analia Silva]{Jo\~{a}o Vitor da Silva, \quad Pablo Ochoa  \\ $\&$ \\ Analia Silva}
\address[Jo\~{a}o Vitor da Silva]{Universidad de Buenos Aires. Departamento de Matem\'atica - FCEyN - Ciudad Universitaria, Pabell\'on I (1428). Buenos Aires, Argentina.}
\address[Pablo Ochoa]{Universidad Nacional de Cuyo and CONICET, Mendoza
5500, Argentina}
\address[Analia Silva]{Instituto de Matem\'{a}tica Aplicada San Luis-IMASL. Universidad Nacional de San Luis and CONICET. Ejercito de los Andes 950. D5700HHW San Luis. Argentina.}
\email[J.V. da Silva]{jdasilva@dm.uba.ar}
\email[P. Ochoa]{ochopablo@gmail.com}
\email[A. Silva]{asilva@dm.uba.ar}
\subjclass[2000]{35B53, 35B65, 35J60, 35K55, 35K65}
\keywords{$p$-Laplacian type operators, dead-core problems, sharp and improved intrinsic regularity, Liouville type results}

\begin{abstract}
 In this manuscript, we study geometric regularity estimates for degenerate parabolic equations of $p$-Laplacian type ($2 \leq p< \infty$) under a strong absorption condition:
$
   \Delta_p u - \frac{\partial u}{\partial t} =  \lambda_0 u_{+}^q  \quad \mbox{in} \quad \Omega_T \defeq \Omega \times (0, T),
$
where  $0 \leq q < 1$ and $\lambda_0$ is a function bounded away from zero and infinity. This model is interesting because it yields the formation of dead-core sets, i.e, regions where non-negative solutions vanish identically. We shall prove sharp and improved parabolic $C^{\alpha}$ regularity estimates  along the set $\mathfrak{F}_0(u, \Omega_T) = \partial \{u>0\} \cap \Omega_T$ (the free boundary), where $\alpha= \frac{p}{p-1-q}\geq 1+\frac{1}{p-1}$. Some weak geometric and measure theoretical properties as non-degeneracy, positive density, porosity and finite speed of propagation are proved. As an application, we prove a Liouville-type result for entire solutions provided their growth at infinity can be appropriately controlled. A specific analysis for Blow-up type solutions will be done as well.  The results obtained in this article via our approach  are new even for dead-core problems driven by the heat operator.
\end{abstract}

\maketitle


\section{Introduction}

\hspace{0.7cm}Throughout the last $40$ years a wide class of parabolic equations has been used  to model problems coming from chemical reactions, physical-mathematical phenomena, biological processes, population dynamics, among others. Some of the main topics approached are reaction-diffusion processes with one-phase transition. Thus, the existence of non-negative solutions plays an essential role in these studies (cf. \cite{BS}, \cite{Stak} and references therein). An enlightening prototype is the following model of an isothermal catalytic reaction-diffusion process:
\begin{equation}\label{DCP0}\left\{
\begin{array}{rclcl}
     \Delta u -\frac{\partial u}{\partial t}& = & f(u) & \mbox{in} & \Omega_T\\
     u(x, t) & = & g(x, t) & \mbox{on} & \partial \Omega \times (0, T)\\
     u(x, 0) & = & u_0(x) & \mbox{in} & \overline{\Omega},
\end{array}
\right.
\end{equation}
where the boundary data fulfils
$$
   0<u_0 \in C^0(\overline{\Omega}), \,\,\,g(x, t) = \mathfrak{j}>0\quad \text{and} \quad u(x, 0) = \mathfrak{j} \,\,\, \forall \,\, x\in \partial \Omega.
$$
Here, $u$ represents the concentration of a reactant evolving in time, where $\Omega_T \defeq \Omega \times (0, T)$, $\Omega \subset \R^N$ is a regular and bounded domain and $f$ is a non-linear convection term fulfilling $f(s) > 0$ if $s>0$ and $f(0) = 0$. Moreover, the boundary conditions mean that the reactant is injected with a constant isothermal flux on the boundary. Recall that in evolution problems  when $f$ is locally Lipschitz, it follows from the Maximum Principle that non-negative solutions must be strictly positive. However when $f$ is not Lipschitz (or  not decaying sufficiently fast) at the origin, then non-negative solutions may exhibit a plateau region, which is known in the literature as \textit{Dead-Core} set, i.e., a region of positive measure where non-negative solutions vanish identically.

 Advances in connection with existence theory of dead-core solutions, formation of dead-core regions and decay estimates at infinity were obtained in \cite{BS}, \cite{Guo}  and \cite{Sperb}.  Other properties of solutions such as growth of interfaces, shrink and estimates of/on the support and finite extinction in reaction-diffusion problems together with other qualitative properties may be found in D\'{i}az \textit{et al}'s fundamental articles \cite{AlvDiaz1}, \cite{AntDiazShma1}, \cite{DiazHer1}, \cite{DiazMing}, the Antontsev \textit{et al}'s classical book \cite{AntDiazShma2},  the survey \cite{Diaz2} and references therein. However, the lack of quantitative properties for $p-$parabolic dead-core problems constitutes our starting point in this research. In particular, we shall be interested in the derivation of quantitative results for the following class of parabolic dead-core problems of p-Laplacian type:

\begin{equation}\label{DCSA}
     \Delta_p \, u - \frac{\partial u}{\partial t} = \lambda_0(x,t).u_{+}^q(x, t)  \quad \mbox{in} \quad \Omega_T,
\end{equation}
with suitable boundary data, where $u_{+}  = \max\{u, 0\}$, $0\leq q<1$, $p \geq 2$, $\Omega \subset \R^N$ is a bounded smooth domain and  $\lambda_0$ (the \textit{Thiele modulus}) is bounded away from zero and infinity. We refer the reader to Section \ref{Prelim} for notation and definitions.

An important feature  in free boundary problems consists of Non-degeneracy of solutions (cf. \cite{LRS}, \cite{daSO}, \cite{SS}, \cite{Shah} and \cite{Tei4} for some examples in elliptic and parabolic settings). Our first main result gives the precise rate at which weak solutions $u$ to \eqref{DCSA} detach from their free boundaries.

\begin{theorem}[{\bf Non-degeneracy}]\label{NonDeg} Let $u$ be a weak solution to \eqref{DCSA}. Then for every $(x_0, t_0) \in  \overline{\{u>0\}}$ and $r>0$ such that $Q_r(x_0, t_0) \Subset \Omega_T$ there holds:
$$
    \displaystyle \sup_{\partial_p Q^{-}_{r}(x_0, t_0)} u(x, t) \geq \mathfrak{C}^{\ast}_0.r^{\frac{p}{p-q-1}},
$$
for a universal constant $\mathfrak{C}^{\ast}_0>0$.
\end{theorem}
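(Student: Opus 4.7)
The exponent $\alpha=p/(p-1-q)$ in the statement is dictated by the homogeneity of the equation. For the radial profile $\Psi(x)=c\,|x-x_0|^{\alpha}$, a direct calculation gives
\[
\Delta_p\Psi-\partial_t\Psi\;=\;A\,c^{p-1}\,|x-x_0|^{\alpha q},\qquad A\;:=\;\alpha^{p-1}\bigl[(\alpha-1)(p-1)+N-1\bigr],
\]
while the source evaluates to $\lambda_0\Psi_+^{q}=\lambda_0\,c^{q}\,|x-x_0|^{\alpha q}$. The two sides share the same $|x-x_0|^{\alpha q}$-factor precisely because $\alpha(p-1-q)=p$, and an appropriate choice of the multiplicative constant converts $\Psi$ into a weak super-solution of \eqref{DCSA} (when $c^{p-1-q}\leq\inf\lambda_0/A$) or a sub-solution (when $c^{p-1-q}\geq\sup\lambda_0/A$). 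This is the barrier that will enforce the sharp $r^{\alpha}$ rate.

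The proof proceeds by contradiction, combined with the intrinsic rescaling of the equation. By continuity of $u$ (standard parabolic regularity for \eqref{DCSA}), the set of $(x_0,t_0)$ where the claimed bound holds is closed, so I first prove it under $(x_0,t_0)\in\{u>0\}\cap\Omega_T$ and recover the free-boundary case by approximation. The rescaling
\[
u_r(y,s)\;:=\;r^{-\alpha}\,u\bigl(x_0+r\,y,\;t_0+r^{\tau}\,s\bigr),\qquad \tau\;:=\;p-\alpha(p-2),
\]
makes $u_r$ solve an equation of exactly the same form in $Q^-_1(0,0)$, with coefficient $\widetilde\lambda_0$ still trapped between the same positive bounds as $\lambda_0$. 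The statement reduces to a universal lower bound $\sup_{\partial_p Q^-_1}u_r\geq\mathfrak{C}^{\ast}_0$.

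Assume, for contradiction, $M:=\sup_{\partial_p Q^-_1}u_r<\mathfrak{C}^{\ast}_0$ for a small universal constant. Since the source $\widetilde\lambda_0(u_r)_+^q$ is non-negative, $u_r$ is a sub-solution of the pure $p$-parabolic operator and the parabolic maximum principle propagates the smallness to $\sup_{Q^-_1}u_r=M$. I would then compare $u_r$ with the super-solution
\[
\Phi(y)\;:=\;c_\star\,|y|^{\alpha}+M,\qquad c_\star\;:=\;\bigl(\inf\lambda_0/A\bigr)^{1/(p-1-q)},
\]
which remains a super-solution because the additive offset preserves $\Phi^q\geq c_\star^{q}|y|^{\alpha q}$. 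On $\partial_p Q^-_1$ one has $\Phi\geq M\geq u_r$, and the comparison principle for \eqref{DCSA}---available because $s\mapsto\lambda_0 s_+^{q}$ is non-decreasing, via a standard Moser-type test of the two equations against $(u_r-\Phi)_+$---propagates $u_r\leq\Phi$ throughout $Q^-_1$.

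To close the argument one iterates the previous comparison on the dyadic sub-cylinders $Q^-_{2^{-k}}(0,0)$ (or, equivalently, runs a blow-up/Liouville argument of the flavor anticipated later in the paper), using the scale invariance of \eqref{DCSA} at the exponent $\alpha$ quantitatively, and forces $u_r\equiv 0$ in a parabolic neighborhood of the origin, contradicting $(0,0)\in\overline{\{u_r>0\}}$. This closing piece is the main obstacle: the comparison step by itself delivers only the tautological pointwise bound $u_r(0,0)\leq M$, and the scale invariance has to be wielded quantitatively to upgrade it into a universal lower bound on $M$ itself. The non-Lipschitz nature of the source $\lambda_0 s_+^{q}$ also requires some care when invoking the comparison principle.
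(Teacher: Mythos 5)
Your argument is not closed, and you acknowledge this yourself: after comparing with $\Phi(y)=c_\star|y|^{\alpha}+M$ you obtain only the tautology $u_r(0,0)\le M$, and the proposed remedy (dyadic iteration, or a blow-up/Liouville argument) is left unexecuted. Moreover, the iteration you sketch cannot work as described: comparing on $Q^{-}_{2^{-k}}$ with a barrier of the form $c_\star|y|^{\alpha}+\sup_{\partial_p Q^{-}_{2^{-k}}}u_r$ only yields $\sup_{Q^{-}_{2^{-k-1}}}u_r\le c_\star 2^{-k\alpha}+\sup_{Q^{-}_{2^{-k}}}u_r$, so the additive constant never improves and you cannot force $u_r\equiv 0$ near the origin. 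The structural obstruction is that your barriers are purely spatial: $c_\star|y|^{\alpha}$ (or its shift by $M$) is not bounded below by a positive universal constant on the whole parabolic boundary of the lower cylinder --- it vanishes (respectively equals only $M$) at the bottom center $(0,-1)$ --- so no comparison against it can ever convert positivity of $u_r$ at the origin into a universal lower bound for $\sup_{\partial_p Q^{-}_1}u_r$. A spatial barrier also cannot detect the evolution from the initial time slice of the cylinder, which is precisely where the dead core may sit (recall the exact solution $[(1-q)\lambda_0(t_0-t)]_{+}^{\frac{1}{1-q}}$).

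The missing idea, which is essentially the whole content of the paper's proof, is a barrier that vanishes exactly at the center point of the rescaled cylinder while staying uniformly positive on all of $\partial_p Q^{-}_1$; this forces a time-dependent term. The paper takes $\hat{\Phi}(x,t)=\mathfrak{c}\bigl(|x|^{\frac{p}{p-1}}+(-t)^{\frac{p-1-q}{(p-1)(1-q)}}\bigr)^{\frac{p-1}{p-1-q}}$ with $\mathfrak{c}$ small and universal so that $\Delta_p\hat{\Phi}-\partial_t\hat{\Phi}-\lambda_0\hat{\Phi}^{q}\le 0$; on $\partial_p Q^{-}_1$ one has $\hat{\Phi}\ge\mathfrak{c}$ because either $|x|=1$ or $-t=1$, while $\hat{\Phi}(0,0)=0$. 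With this barrier no smallness hypothesis or iteration is needed: after your (correct) reduction by continuity to $(x_0,t_0)\in\{u>0\}$ and the intrinsic rescaling (your $\tau$ is exactly $\theta$), if $u_r\le\hat{\Phi}$ held on all of $\partial_p Q^{-}_1$, the comparison principle (Lemma \ref{comparison}, applicable since $s\mapsto\lambda_0 s_{+}^{q}$ is continuous, non-negative and non-decreasing) would give $u_r(0,0)\le\hat{\Phi}(0,0)=0$, contradicting $u_r(0,0)>0$; hence $u_r\ge\hat{\Phi}\ge\mathfrak{c}$ at some point of $\partial_p Q^{-}_1$, and scaling back gives $\sup_{\partial_p Q^{-}_{r}(x_0,t_0)}u\ge\mathfrak{c}\,r^{\frac{p}{p-1-q}}$. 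Your opening computation of $\Delta_p(c|x|^{\alpha})$ and the threshold $c^{p-1-q}\le\inf\lambda_0/A$ is correct and is morally what fixes $\mathfrak{c}$, but without the time term and the vanishing-at-the-center comparison your scheme does not reach the conclusion.
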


Throughout this paper \textit{universal constants} are the ones depending only on dimension and structural properties of the problem, i. e. $N, p, q$ and bounds of $\lambda_0$.

 The proof of Theorem \ref{NonDeg} consists of combining the construction of an appropriate barrier function
$$
    \Phi(x, t)  = \mathfrak{a}.\Big( |x|^{\frac{p}{p-1}} + \mathfrak{b}t^{\frac{p-1-q}{(p-1)(1-q)}}\Big)^{\frac{p-1}{p-1-q}},
$$
for universal constants $\mathfrak{a}, \mathfrak{b}>0$, together with a version of comparison principle to dead-core solutions. Observe that the assumption $0 \leq q <1$ is necessary to guarantee that the barrier does not blow up as $t$ tends to $0$.

We also establish an upper growth estimate for dead-core solutions of \eqref{DCSA}.

\begin{theorem}[{\bf Improved regularity along free boundary}]\label{MThm0}
Let $u$ be a non-negative and bounded weak solution to \eqref{DCSA}, such that $\frac{\partial u}{\partial t} \geq 0$ a.e. in $\Omega_T$. Then, for every compact set $\mathrm{K} \Subset \Omega_T$ and for every $(x_0, t_0)\in \partial \{u > 0\} \cap \mathrm{K}$, there holds:
$$
     u(x, t) \leq \mathfrak{C}.\|u\|_{L^{\infty}(\Omega_T)}\left(|x-x_0|+|t-t_0|^{\frac{1}{\theta}}\right)^{\frac{p}{p-q-1}},
$$
for a constant $\mathfrak{C} >0$ depending only on universal parameters and $\dist(\mathrm{K}, \partial_p \Omega_T)$ and for all $(x, t)$ sufficiently close to $(x_0, t_0)$.
\end{theorem}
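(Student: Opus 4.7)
Set $\alpha \defeq p/(p-q-1)$ and $\theta \defeq p(1-q)/(p-q-1)$: these are the unique exponents for which \eqref{DCSA} is invariant under the rescaling $u \mapsto r^{-\alpha} u(x_0 + r\,\cdot\,, t_0 + r^{\theta}\,\cdot\,)$. The intrinsic parabolic cylinder is $Q_r(x_0, t_0) \defeq B_r(x_0) \times (t_0 - r^{\theta}, t_0 + r^{\theta})$. After translating $(x_0,t_0)$ to the origin and normalizing $\|u\|_{L^{\infty}(\Omega_T)} = 1$, the statement reduces to showing
$$
   \sup_{Q_r(0,0)} u \,\leq\, \mathfrak{C}\, r^{\alpha} \qquad \text{for every } 0 < r \leq r_0,
$$
for some $r_0 > 0$ depending only on $\dist(\mathrm{K}, \partial_p \Omega_T)$; the stated pointwise inequality follows by taking $r = |x-x_0| + |t-t_0|^{1/\theta}$.

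The preceding bound is reached via the following one-step oscillation decay, which is the heart of the proof: there exist universal constants $\lambda \in (0, 1/2)$ and $C_0 > 0$ such that, writing $S(r) \defeq \sup_{Q_r(0,0)} u$,
$$
  S(\lambda r) \,\leq\, \max\bigl\{\,C_0\, (\lambda r)^{\alpha},\; \lambda^{\alpha}\, S(r)\,\bigr\}, \qquad 0 < r \leq r_0.
$$
An elementary iteration along the geometric chain $r_k = \lambda^k r_0$ (tracking the ratio $S(r_k)/r_k^{\alpha}$) and interpolation across intermediate scales upgrades this inequality to $S(r) \leq \mathfrak{C}\, r^{\alpha}$ for every $r \leq r_0$.

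To prove the one-step decay I argue by compactness. Assume it fails: for each $j \in \N$ there exist a solution $u_j$ of \eqref{DCSA} with $\partial_t u_j \geq 0$, a radius $r_j$, and a free-boundary point translated to the origin such that $M_j \defeq S_j(r_j)$ satisfies $M_j / r_j^{\alpha} \to \infty$ and $\sup_{Q_{\lambda r_j}} u_j > \lambda^{\alpha} M_j$. Define $w_j(y, s) \defeq u_j(r_j y, r_j^{\theta} s)/M_j$. A direct computation, using $p - \theta = \alpha(p-2)$ and $p = \alpha(p-q-1)$, shows
$$
  \Delta_p w_j - \sigma_j\, \partial_s w_j \;=\; \gamma_j\, \widetilde{\lambda}_{0,j}(y, s)\, (w_j)_+^{q} \quad \text{in } Q_1,
$$
with $\sigma_j = (r_j^{\alpha}/M_j)^{p-2}$ and $\gamma_j = (r_j^{\alpha}/M_j)^{p-q-1}$, both tending to $0$ (and $\sigma_j \equiv 1$ in the borderline case $p = 2$). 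Moreover $0 \leq w_j \leq 1$, $\partial_s w_j \geq 0$, $w_j(0,0) = 0$, and $\sup_{Q_{\lambda}} w_j > \lambda^{\alpha}$. Uniform interior Hölder estimates for the $p$-parabolic family extract a limit $w_\infty$ in $C^{0}_{\mathrm{loc}}(Q_1)$ which is non-negative, non-decreasing in $s$, vanishes at the origin, and solves the limiting homogeneous equation---the heat equation when $p = 2$, and the stationary $p$-Laplace equation slicewise when $p > 2$. The strong minimum principle for the limit operator, combined with $\partial_s w_\infty \geq 0$, forces $w_\infty \equiv 0$ on a full parabolic neighborhood of the origin (propagating the zero at $(0,0)$ both backward by the minimum principle and forward by monotonicity), contradicting $\sup_{Q_{\lambda}} w_\infty \geq \lambda^{\alpha} > 0$.

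The main obstacle is precisely this limit analysis in the degenerate regime $p > 2$: the intrinsic rescaling makes the time derivative formally disappear, so the limit object is elliptic-with-parameter and the strong minimum principle has to be applied slicewise and then transported in time via the monotonicity. The hypothesis $\partial_t u \geq 0$ is indispensable here; without it the propagation of $w_\infty(0,0)=0$ forward in time fails and the contradiction must instead be extracted by invoking the non-degeneracy from Theorem \ref{NonDeg} applied to $w_\infty$ itself, which is considerably more delicate. A secondary technical point is verifying the uniform compactness of the sequence $(w_j)$ given that the source $\gamma_j \widetilde{\lambda}_{0,j} (w_j)_+^{q}$ is only bounded (not continuous) in $w_j$, for which one relies on the $L^{\infty}$-to-$C^{0,\beta}$ regularity theory for the $p$-parabolic operator.
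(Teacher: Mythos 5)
Your compactness set-up and the dyadic iteration are in the spirit of the paper's Lemma \ref{LemmaIter}, but two steps do not go through as written. The first, and decisive, one is the final contradiction: with $\partial_s w_\infty\ge 0$ the zero at the origin propagates \emph{backward} in time (for $s\le 0$ one has $w_\infty(\cdot,s)\le w_\infty(\cdot,0)$, and the slicewise strong minimum principle annihilates the slice $s=0$), but nothing propagates it \emph{forward}: monotonicity only yields $w_\infty(\cdot,s)\ge 0$ for $s>0$, and neither the heat equation ($p=2$) nor the slicewise $p$-Laplace limit ($p>2$) transports information from $s=0$ to $s>0$. Since you define $S(r)$ over the full cylinder $Q_r$ and, precisely because $\partial_t u\ge 0$, the supremum in your contradiction hypothesis $\sup_{Q_{\lambda r_j}}u_j>\lambda^{\alpha}M_j$ sits at positive times, concluding $w_\infty\equiv 0$ only for $s\le 0$ contradicts nothing. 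This is why the paper runs the whole iteration on lower cylinders $Q^-_r$ (via $\mathcal{S}_r[u]$ in Lemma \ref{LemmaIter} and the induction of Theorem \ref{MThm}) and then recovers the bound for $t>t_0$ by a separate step: comparison (Lemma \ref{comparison}) with the explicit supersolution $\Phi(x,t)=\mathfrak{c}_1\big(\mathfrak{a}|x|^{\frac{p}{p-1}}+\mathfrak{b}\,t^{\frac{p-1-q}{(p-1)(1-q)}}\big)^{\frac{p-1}{p-1-q}}$ on $Q_1^{+}$. Without such a barrier argument your reduction $r=|x-x_0|+|t-t_0|^{1/\theta}$, which needs the estimate at future times, is not justified.

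The second gap is the compactness itself. Your rescaling keeps the cylinders fixed and yields $\Delta_p w_j-\sigma_j\partial_s w_j=f_j$ with $\sigma_j\to 0$; the interior H\"{o}lder theory for the $p$-parabolic operator is intrinsic and is \emph{not} uniform in this degeneration (normalizing the time derivative compresses the cylinder to intrinsic time-height of order $\sigma_j$, so the interior estimates blow up), and a bounded right-hand side alone cannot give space-time equicontinuity, since in the limit the equation decouples in time. The paper circumvents this by rescaling time intrinsically, so that $v_k$ solves the standard $p$-parabolic equation and classical compactness applies, and then proves that the limit is time-independent through the quantitative estimate $|v_k(x,t)-v_k(x,s)|\le \gamma\mathfrak{A}\big[2^{\theta}|t-s|k^{2-p}\big]^{\beta/p}\to 0$, which crucially uses the comparability of suprema at consecutive scales encoded in $\mathbb{V}_{p,q}[u_k]$; an analogous quantitative input is missing from your argument, and invoking ``$L^{\infty}$-to-$C^{0,\beta}$ theory'' does not supply it uniformly in $j$. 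A smaller point: normalizing $\|u\|_{L^{\infty}}=1$ alone turns the Thiele modulus into $\lambda_0\|u\|^{q+1-p}$, which may be large; universality of the constant is restored in the paper by the joint choice of $\kappa_0$ and $\mathfrak{R}_0$ together with the smallness regime of Remark \ref{FlatHip}.
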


We briefly discuss some heuristic remarks related to the content of Theorem \ref{MThm0}. Firstly, the assumption of monotonicity in  time  is natural in the literature, see for example \cite{daSO}, \cite{Shah} and \cite{Shah07}, and it is not restrictive, because it can be deduced from the boundary conditions, for example.

Recall that the lack of Strong Minimum Principle, namely when $q<1$, might lead  to the existence of plateau regions to solutions of \eqref{DCSA}. In effect, the following profiles
\begin{eqnarray*}
 u(x, t) \defeq [(1-q)\lambda_0.(t_0-t)]_{+}^{\frac{1}{1-q}} \quad \mbox{or} \quad u(x, t) \defeq  \left[\lambda_0. \frac{(p-q-1)^{p}}{p^{p-1}(pq+N(p-1-q))}\right]^{\frac{1}{p-q-1}}.(\pm x)_{+}^{\frac{p}{p-q-1}}
\end{eqnarray*}
are weak dead-core solutions to \eqref{DCSA} with $\lambda_0(x, t) = \lambda_0$. This shows that that dead-core solutions may exist.  With respect to regularity, let us recall that in general (bounded) solutions to \eqref{DCSA} are parabolically $C^{1+ \alpha}$ for some $0 <\alpha\leq 1$ (cf. \cite{DB93}, \cite{DiBUV}, and \cite{Urbano}). Furthermore, a local $C^{\alpha^{\sharp}}$ behaviour for $\alpha^{\sharp} = 1+ \frac{1}{p-1}$ is the better regularity estimates expected for weak solutions with bounded right hand side (compare with \cite{ALS} in the context of elliptic and parabolic $p-$obstacle problems and \cite{ATU1, ATU2} for the elliptic context of the so termed $C^{p^{\prime}}$ regularity conjecture).

Notice that the dead-core analysis carries on an insightful and deep significance: solutions to \eqref{DCSA} have an improved pointwise behaviour as following:
$$
   \displaystyle \sup_{Q_r(x_0, t_0)} u(x, t) \approx r^{\alpha_0(p, q)} \quad \text{for} \quad \alpha_0(p, q) \defeq \frac{p}{p-q-1},
$$
around  free boundary points. Therefore, Theorem \ref{MThm0} yields that solutions are expected to be $C_{x, t}^{\left\lfloor \alpha_0\right\rfloor, \frac{\left\lfloor \alpha_0\right\rfloor}{\theta}}$ at free boundary points, where $\theta$ is the \textit{intrinsic scaling factor} in time variable:
$$
   \theta = \theta(p, \alpha_0) \defeq p + \alpha_0(2-p).
$$
We point out that for $p \geq 2$:
$$
   \theta(p, q) = \frac{p(1-q)}{p-q-1}>0 \quad \Leftrightarrow \quad 0 \leq q<1,
$$
which assures once again that the assumption $0 \leq q < 1$ must be present. Moreover, notice that:
\begin{eqnarray*}
 \alpha_0 = \frac{p}{p-q-1} > 1+\frac{1}{p-1}=\alpha^{\sharp} \quad \Leftrightarrow \quad q > 0.
\end{eqnarray*}Thus, we derive better regularity estimates along $\partial\{u>0\} \cap \Omega_T$.  Also, observe that $\alpha_0>2$ provided $2\leq p<4$, i.e., weak solutions become classical solutions. In summary, thanks to Theorem \ref{MThm0} we can access better regularity estimates on free boundary points than those available currently  (cf. \cite{ATU1, ATU2}, \cite{DB93}, \cite{DiBUV} and \cite{Urbano}). It is worth highlighting that such an approach has become a mainstream line of investigation in the  literature. We must mention, as one of the starting points of this theory, the Teixeira's work \cite{Tei1} where improved regularity estimates are obtained in the context of degenerate elliptic PDEs precisely along their set of critical points.

The technique applied in the proof of Theorem \ref{MThm0} can be adjusted in order to yield other relevant consequences. In this direction, we are able to establish a sharp gradient decay at free boundary points for a particular class of dead-core solutions, see Lemma \ref{IRresult2} for more details.

Our article has been strongly influenced by the \textit{geometric tangential analysis}, \textit{intrinsic scaling technique} and the theory of \textit{geometric free boundary problems} (a list of surveys related to these topics is \cite{CafSal},  \cite{CW}, \cite{SP}, \cite{ST}, \cite{PetrShah}, \cite{PT}, \cite{Shah07}, \cite{Tei2}, \cite{Tei5}, \cite{TU}, and \cite{Urbano}). Particularly, we should quote \cite{KKPS}, \cite{Shah} and \cite{Tei4}, which are ground-breaking manuscripts in the modern theory of free boundary and dead-core problems. Some of the core ideas further developed in this current article were introduced in such pivotal references.

We point out some final comments. The main obstacle in obtaining the estimates from Theorem \ref{MThm0}  for the singular case $1<p<2$ is that the technique applied to prove the Non-degeneracy property, does not work  for  the singular setting. Finally, we highlight that the methods employed through this paper only explore the structure and intrinsic scaling of the $p$-Laplacian operator. Indeed, the only requirement is the compactness argument, \textit{a priori} estimates and the strong maximum principle for the limiting equation. Consequently, the proofs can be adapted to more general parabolic problems of $p$-Laplacian type.

Our article is organized as following: In  Section \ref{Prelim} we shall present some preliminaries tools. An appropriated notion of weak solution is enunciated. We will also refer some results about comparison and we introduce a family of solutions invariant under intrinsic scaling. In Section \ref{SIRE}, we will prove a central result in our article, Lemma \ref{LemmaIter}, which allows us to place solutions in a flatness improvement regime.  At the end of this section, we prove Theorem \ref{MThm0}. Similar growth rate for gradient along free boundary points will also be performed, Lemma \ref{IRresult2}. Section \ref{NondFP} will be devoted to prove Non-degeneracy property, Theorem \ref{NonDeg}, and some of its consequences. Next, Section \ref{FurtAppCons} will be dedicated to present some applications of the main results. Blow-up analysis and some Liouville type results are proved.

\section{Preliminaries}\label{Prelim}

In this section we shall present the main tools and definitions that will be used through the article.

For $x_0 \in \mathbb{R}^{N}$ and $r >0$, we denote by $B_r(x_0)$ the Euclidean open ball with center $x_0$ and radius $r$. Also, for a point $(x_0, t_0) \in \Omega \times \R$ and $r > 0$, we consider three kinds of parabolic cylinders
$$
\begin{array}{rcll}
  Q_r(x_0, t_0) & \defeq  & B_r(x_0) \times (t_0-r^{\theta}, t_0+r^{\theta}) &  (\text{whole cylinder})\\
  Q^{+}_r(x_0, t_0) & \defeq & B_r(x_0) \times [t_0, t_0+r^{\theta}) & (\text{the upper semi-cylinder}) \\
  Q^{-}_r(x_0, t_0) & \defeq & B_r(x_0) \times (t_0-r^{\theta}, t_0] & (\text{the lower semi-cylinder}),
\end{array}
$$
where $\theta$ is the \textit{intrinsic scaling factor} in time variable:
$$
   \theta = \theta(p, q) \defeq \frac{p(1-q)}{p-1-q}.
$$
Moreover, we will omit the center of the cylinder as $(x_0, t_0) = (0, 0)$.

For a parabolic domain $ Q \defeq \Omega \times \mathcal{I}$, where $\mathcal{I}$ is an interval with endpoints $\mathfrak{a} < \mathfrak{b}$, we define the parabolic boundary by: $\partial_p Q \defeq (\overline{\Omega} \times \{\mathfrak{a}\})\cup (\partial \Omega \times \mathcal{I}).$

For $p>2$ and every $\mathrm{K} \Subset \Omega_T$ we have according to \cite[Chapter III]{DB93} the following intrinsic parabolic distance:
$$
\displaystyle \dist(\mathrm{K}, \partial_p \Omega_T) \defeq \inf_{(x, t) \in \mathrm{K} \atop{(y, s)\in \partial_p \Omega_T}} \left[|x-y|+\|u\|^{\frac{p-2}{p}}_{L^{\infty}\left(\Omega_T\right)}|t-s|^{\frac{1}{p}}\right]
$$

In the following, we give a notion of weak solution, see \cite[page 11]{DB93}.

\begin{definition}[\textbf{Weak solution-Steklov average}] A function $u \in \mathrm{V}^{1, p}(\Omega_T) = L^{\infty}((0,T); L^{1}(\Omega)) \cap L^{p}_{loc}((0,T);W^{1,p}_{loc}(\Omega))$ is said to be a weak solution to \eqref{DCSA} if, for every compact set $K \subset \Omega$ and every $0 < t<T-h$ there holds:
$$
  \int_{K \times \{t\}}\left[\frac{\partial u_h}{\partial t}+(|\nabla u|^{p-2}\nabla u)_h \cdot \nabla \zeta \right] dx=\int_{K\times \{t\}}f_h(x, u)\zeta dx
$$
for all $\zeta \in W^{1, p}_0(K),$ where $f_h(x, u) = (\lambda_0u_{+}^q)(x, t)$ and the \textit{Steklov average} of a function $v$ is defined as follows:
\begin{equation*}
  v_h(x, t) \defeq  \left\lbrace
  \begin{array}{rcl}
    \displaystyle \frac{1}{h}\int_t^{t+h}v(x, \mu)d\mu &  \text{ if } & t\in (0, T-h]\\
   0   & \text{ if } & t \in (T-h, T).
  \end{array}
  \right.
\end{equation*}
We point out that one takes the Steklov average since $\frac{\partial u}{\partial t}$ may not exist as a function in $\mathrm{V}^{1, p}$.
\end{definition}

We also quote a useful comparison result for weak solutions.

\begin{lemma}[{\bf Comparison Principle \cite[Theorem 1]{Diaz2}}]\label{comparison} Let $f \in C([0, \infty))$ be a non-negative and non-decreasing function. Assume that we have weakly:
$$
  \Delta_p \, u -\frac{\partial u}{\partial t} - f(u) \leq 0\leq \Delta_p \, v - \frac{\partial v}{\partial t} - f(v) \quad \mbox{in} \quad \Omega_T.
$$
If $v \leq u $ in $\partial_p \Omega_T$ then $v \leq u$ in $\Omega_T$.

\end{lemma}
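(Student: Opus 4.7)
The plan is a standard energy argument: test the difference of the weak inequalities against $(v-u)_+$, and exploit the monotonicity of both the $p$-Laplace vector field and the absorption nonlinearity $f$ to close the estimate.

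First, I subtract the weak formulations of the sub- and supersolution and arrive at the distributional inequality
\begin{equation*}
\Delta_p v - \Delta_p u - \partial_t(v-u) \;\geq\; f(v)-f(u) \qquad \text{in } \Omega_T.
\end{equation*}
The hypothesis $v \leq u$ on $\partial_p \Omega_T$ makes $w \defeq (v-u)_+$ an admissible test function: it vanishes on the parabolic boundary (in particular at the initial slice $t = \mathfrak{a}$) and belongs to $L^p_{loc}((0,T); W^{1,p}_0(\Omega))$. Pairing with $w$, integrating over $\Omega \times (\mathfrak{a},\tau)$ for an arbitrary $\tau \in (\mathfrak{a},\mathfrak{b})$, and integrating by parts in space (the boundary contribution vanishes because $w \in W^{1,p}_0$) produces
\begin{equation*}
\tfrac{1}{2}\!\int_\Omega w^2(x,\tau)\,dx \;+\; \int_\mathfrak{a}^\tau\!\!\int_\Omega \bigl\langle |\nabla v|^{p-2}\nabla v - |\nabla u|^{p-2}\nabla u,\,\nabla w\bigr\rangle\,dx\,dt \;+\; \int_\mathfrak{a}^\tau\!\!\int_\Omega (f(v)-f(u))\,w\,dx\,dt \;\leq\; 0.
\end{equation*}

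Second, I observe that the two integral terms on the left are both non-negative. The gradient term is non-negative by the classical monotonicity inequality for the $p$-Laplace field $\xi \mapsto |\xi|^{p-2}\xi$ applied to $\nabla v$ and $\nabla u$ (recalling $\nabla w = \nabla(v-u)$ on $\{w>0\}$ and $\nabla w = 0$ elsewhere). The $f$-term is non-negative because on the set $\{v>u\}$ the monotonicity of $f$ yields $f(v) \geq f(u)$, while on $\{v \leq u\}$ the factor $w$ vanishes; in either case the integrand is $\geq 0$. Dropping these two non-negative contributions forces
\begin{equation*}
\int_\Omega (v-u)_+^2(x,\tau)\,dx \;\leq\; 0 \qquad \text{for a.e. } \tau \in (\mathfrak{a},\mathfrak{b}),
\end{equation*}
whence $(v-u)_+ \equiv 0$ in $\Omega_T$, i.e.\ $v \leq u$, as desired.

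The main technical obstacle is that the formal manipulation above cannot be carried out directly, since $\partial_t u$ and $\partial_t v$ need not exist as $L^2$-functions. The argument must therefore be executed at the level of Steklov averages, exactly as in the definition of weak solution stated just above the lemma: I would test the averaged equations against the Steklov average of $w$, subtract, integrate in time, and pass to the limit $h \to 0$ using strong $L^p_{loc}$ convergence of Steklov averages to recover the three terms displayed. This Steklov-level identification of the time-derivative term as $\tfrac{1}{2}\partial_t\|w\|_{L^2}^2$ is the only non-routine step; once it is in place, the sign analysis above is immediate and the conclusion follows.
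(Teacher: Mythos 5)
The paper does not prove this lemma at all: it is quoted verbatim from D\'{i}az's survey (Theorem 1 of \cite{Diaz2}) and used as a black box, so there is no internal proof to compare against. Your energy argument is the standard self-contained proof of such comparison principles and it is essentially correct: the sign bookkeeping after subtracting the two weak inequalities and testing with $w=(v-u)_+$ is right, the gradient term is handled by monotonicity of $\xi\mapsto|\xi|^{p-2}\xi$, the absorption term only needs $f$ non-decreasing on the set $\{v>u\}$, and you correctly identify that the time term must be treated through Steklov averages (identifying it with $\tfrac12\int_\Omega w^2(\cdot,\tau)\,dx$ minus the vanishing initial contribution) since $\partial_t u,\partial_t v$ need not exist as functions. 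Two caveats are worth making explicit if you write this up in full. First, the paper's notion of weak solution is purely local ($\zeta\in W^{1,p}_0(K)$ for compacta $K\Subset\Omega$, and $u\in L^p_{loc}W^{1,p}_{loc}$ only), so the admissibility of $(v-u)_+$ as a \emph{global} test function vanishing on $\partial\Omega$ presupposes that $u,v$ have enough regularity up to the parabolic boundary for the inequality $v\le u$ on $\partial_p\Omega_T$ to be meaningful in the trace sense and for $(v-u)_+$ to lie in $L^p(0,T;W^{1,p}_0(\Omega))$; this is implicit in the statement but should be said. Second, finiteness of $\int_\Omega w^2(\cdot,\tau)\,dx$ is not automatic from $u,v\in L^\infty((0,T);L^1(\Omega))$ alone, but is harmless in the paper's setting of bounded solutions. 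With these standard provisos your argument closes the proof, and it buys a transparent, elementary derivation of a result the paper only cites.
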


Hereafter we shall adopt the following notation:
$$
   \displaystyle \mathcal{S}_{(r, x_0, t_0)}[u] \defeq  \sup_{Q^{-}_r(x_0, t_0)} u(x, t).
$$
Moreover, for notational simplicity, we will omit the $(x_0, t_0)$ when the center of the cylinder is the origin.

Throughout the article we shall consider the following family of solutions which are invariant under intrinsic scaling.

\begin{definition}\label{Class functions} Let $Q_1$ be the unit cylinder in $\mathbb{R}^{N+1}$. For any $2\leq p< \infty$ we say that $u \in \mathfrak{J}_p(\lambda_0, q)(Q_1)$ if:
\begin{enumerate}
  \item[\checkmark] $\Delta_p u-\frac{\partial u}{\partial t}  = \lambda_0(x, t)u^q_{+}(x, t) \quad \mbox{in} \quad Q_1$, for $0\leq q< \min\{1, p-1\} = 1$.
  \item[\checkmark] $0 \leq u \leq 1, \,\,\,0< \mathfrak{m} \leq \lambda_0 \leq \mathfrak{M} \quad \mbox{in} \quad Q_1$.
  \item [\checkmark] $\frac{\partial u}{\partial t}\geq 0$ a.e. in $Q_1$.
  \item[\checkmark] $u(0, 0)=0$.
\end{enumerate}
\end{definition}

In the next, we shall also consider  for $u \in \mathfrak{J}_p(\lambda_0, q)(Q_1)$ the following set:

$$
  \mathbb{V}_{p, q}[u] \defeq \left\{j \in \mathbb{N}\cup\{0\}; \mathcal{S}_{\frac{1}{2^j}}[u] \leq 2^{\frac{p}{p-1-q}}\max\left\{1,\frac{1}{\mathfrak{C}^{\ast}_0} \right\}\mathcal{S}_{\frac{1}{2^{j+1}}}[u] \right\},
$$
where $\mathfrak{C}^{\ast}_0 > 0$ is the constant in Theorem \ref{NonDeg}. Moreover, observe that $\mathbb{V}_{p,q}[u]$ is not empty. Indeed, $j=0\in \mathbb{V}_{p,q}[u]$ since, in view of Theorem \ref{NonDeg}:

$$
   \mathcal{S}_{\frac{1}{2}}[u] \geq \mathfrak{C}^{\ast}_0\left(\frac{1}{2}\right)^{\frac{p}{p-1-q}} \geq \mathfrak{C}^{\ast}_0\left(\frac{1}{2}\right)^{\frac{p}{p-1-q}}  \mathcal{S}_{1}[u],
$$
which shows that:
$$
  \mathcal{S}_{1}[u]\leq 2^{\frac{p}{p-1-q}}\max\left\{1,\frac{1}{\mathfrak{C}^{\ast}_0} \right\}\mathcal{S}_{\frac{1}{2}}[u].
$$

\section{Sharp improved regularity estimates}\label{SIRE}

\hspace{0.7cm}Through this Section we will prove an improved regularity result for weak solutions to \eqref{DCSA} along their free boundaries. We begin by estimating the growth rate of functions in $\mathfrak{J}_p(\lambda_0, q)(Q_1)$ near the free boundary.

\begin{lemma}\label{LemmaIter} There exist a positive constant $\mathfrak{C}_0 =  \mathfrak{C}_0(N, p, q, \mathfrak{M})$ such that:

  \begin{equation}\label{Eqiter}
     \mathcal{S}_{\frac{1}{2^{j+1}}}[u] \leq \mathfrak{C}_0.\left(\frac{1}{2^j}\right)^{\frac{p}{p-1-q}}
  \end{equation}
  for all $u \in \mathfrak{J}_p(\lambda_0, q)(Q_1)$ and $j \in \mathbb{V}_{p, q}[u]$.
\end{lemma}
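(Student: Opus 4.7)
The plan is to argue by contradiction via intrinsic scaling and a compactness argument, reducing to the homogeneous $p$-parabolic equation where the strong minimum principle produces the contradiction. Suppose, for the sake of contradiction, that no such $\mathfrak{C}_0$ exists; then there are sequences $u_k \in \mathfrak{J}_p(\lambda_0^{(k)}, q)(Q_1)$ and $j_k \in \mathbb{V}_{p,q}[u_k]$ such that, writing $r_k \defeq 1/2^{j_k+1}$, $\mu_k \defeq \mathcal{S}_{r_k}[u_k]$, and $\lambda_k \defeq \mu_k\, r_k^{-p/(p-1-q)}$, one has $\lambda_k \to \infty$ (and hence $r_k \to 0$, since $\mu_k \leq 1$). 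The key device will be the intrinsic rescaling
$$
v_k(x,t) \defeq \mu_k^{-1}\, u_k(r_k x,\, s_k t), \qquad s_k \defeq r_k^{\,p}\mu_k^{2-p},
$$
which, by a direct computation, turns $v_k$ into a weak solution of
$$
\Delta_p v_k - \partial_t v_k = \lambda_k^{-(p-1-q)}\, \lambda_0^{(k)}(r_k x, s_k t)\, v_k^{\,q}
$$
on an expanding space-time domain, with right-hand side tending to zero uniformly on compact sets (since $p-1-q > 0$). By construction $v_k \geq 0$, $v_k(0,0)=0$, $\partial_t v_k \geq 0$ a.e., and the monotonicity transfers the supremum onto the top slice, giving the normalization $\sup_{B_1} v_k(\cdot,0) = 1$.

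I would next exploit the hypothesis $j_k \in \mathbb{V}_{p,q}[u_k]$, which is $\mathcal{S}_{2r_k}[u_k] \leq C^\ast \mu_k$ with $C^\ast \defeq 2^{p/(p-1-q)}\max\{1, 1/\mathfrak{C}^{\ast}_0\}$. After rescaling, this reads $\sup_{B_2 \times (-2^\theta \lambda_k^{p-2}, 0]} v_k \leq C^\ast$, and the time-monotonicity extends this to $v_k \leq C^\ast$ on $B_2 \times (-\infty, 0]$ (for $k$ large enough that the domain of $v_k$ covers this set). The interior H\"{o}lder/$C^{1,\alpha}$ estimates of DiBenedetto \cite{DB93} for $p$-parabolic equations with bounded source render $\{v_k\}$ locally equicontinuous; a diagonal extraction yields a subsequence with $v_k \to v_\infty$ locally uniformly on $\mathbb{R}^N \times (-\infty, 0)$, where $v_\infty$ is a non-negative weak solution of the homogeneous equation $\Delta_p v - \partial_t v = 0$, with $\partial_t v_\infty \geq 0$ a.e. and $v_\infty(0,0) = 0$.

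The contradiction then follows from the strong minimum principle. Indeed, monotonicity together with $v_\infty(0,0)=0$ and $v_\infty \geq 0$ gives $v_\infty(0, t) \equiv 0$ for $t \leq 0$; the strong minimum principle for the homogeneous $p$-parabolic equation applied at an interior vanishing point $(0, t_0)$ with $t_0 < 0$ then forces $v_\infty \equiv 0$ on $\mathbb{R}^N \times (-\infty, t_0]$, and letting $t_0 \to 0^-$ together with continuity propagates this to $\mathbb{R}^N \times (-\infty, 0]$. On the other hand, $\sup_{B_1} v_k(\cdot, 0) = 1$ is attained at some $x_k \in \overline{B_1}$, and after passing to a further subsequence $x_k \to x_\infty \in \overline{B_1}$; uniform H\"{o}lder control of $v_k$ in a past neighborhood of $(x_\infty, 0)$ (combined with monotonicity in $t$) forces $v_\infty(x_\infty, 0) = 1$, contradicting $v_\infty \equiv 0$. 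The main technical obstacle is the compactness step: extracting uniform H\"{o}lder estimates for $v_k$ up to the slice $t = 0$ from one-sided past $L^\infty$ bounds, which must be handled via the time-monotonicity and the standard propagation of interior regularity; a subordinate delicate point is the correct invocation of the strong minimum principle in the degenerate regime $p \geq 2$ at points where the zero set touches the top of the cylinder.
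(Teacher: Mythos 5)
Your setup coincides with the paper's: argue by contradiction, rescale intrinsically (your $s_k=r_k^p\mu_k^{2-p}$ is exactly the paper's $\mathfrak{a}_k$ up to the harmless choice $r_k=2^{-(j_k+1)}$ versus $2^{-j_k}$), use the $\mathbb{V}_{p,q}$ hypothesis for the uniform bound, let the right-hand side vanish, and pass to a limit $v_\infty$ solving the homogeneous $p$-parabolic equation with $\partial_t v_\infty\geq 0$, $v_\infty(0,t)=0$ for $t\leq 0$, and unit supremum on the top slice. For $p=2$ your conclusion via the strong minimum principle for caloric functions is exactly the paper's Case 1. However, for $p>2$ your key step is wrong: the degenerate homogeneous equation $\Delta_p v-\partial_t v=0$ does \emph{not} satisfy a strong minimum principle, even within your class of nonnegative, time-nondecreasing solutions. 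A concrete counterexample is
\begin{equation*}
v(x,t)=c_p\,(T-t)^{-\frac{1}{p-2}}\,(x_1)_+^{\frac{p}{p-2}},\qquad t<T,
\end{equation*}
which for a suitable $c_p>0$ is a nonnegative weak solution, is nondecreasing in $t$, vanishes identically on the half-space $\{x_1\leq 0\}$ (so at interior points of the past cylinder), and is not identically zero. So the deduction ``$v_\infty(0,t_0)=0$ at an interior point $\Rightarrow v_\infty\equiv 0$ in the past'' fails precisely in the regime $p>2$, which is the heart of the lemma; the paper explicitly flags the ``absence of the strong minimum principle'' and treats this case differently. What the paper does instead is to show that the limit is \emph{time-independent}: using the $\mathbb{V}_{p,q}$ doubling property and DiBenedetto's intrinsic H\"older estimate in time, one gets $|v_k(x,t)-v_k(x,s)|\leq \gamma\,\mathfrak{A}\,[2^{\theta}|t-s|\,k^{2-p}]^{\beta/p}$, where the crucial factor $k^{2-p}\to 0$ comes from the contradiction hypothesis $\mathcal{S}_{2^{-(j_k+1)}}[u_k]>k\,2^{-j_k\frac{p}{p-1-q}}$ together with $p>2$. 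Hence $\partial_t v_\infty\equiv 0$, $v_\infty$ is a nonnegative $p$-harmonic function vanishing at the origin, and the \emph{elliptic} strong minimum principle (V\'azquez) gives $v_\infty\equiv 0$, contradicting the unit supremum. Your proposal is missing this idea, and the ``delicate point'' you defer is in fact an obstruction, not a technicality.

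A secondary, repairable inaccuracy: you cannot extract convergence ``locally uniformly on $\mathbb{R}^N\times(-\infty,0)$''. The hypothesis $j_k\in\mathbb{V}_{p,q}[u_k]$ controls only $\mathcal{S}_{2r_k}[u_k]$, so after rescaling the uniform bound $v_k\leq C^\ast$ is available only on $B_2\times(-2^\theta\lambda_k^{p-2},0]$ (for $p=2$ this time length does not even grow); outside $B_2$ one only has $v_k\leq \mu_k^{-1}\to\infty$. This does not break your scheme, since the contradiction only uses a fixed neighborhood of $\overline{B_1}\times\{0\}$, but the limit should be taken there (as the paper does, in $\overline{Q^-_{4/5}}$), not on the whole space-time.
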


\begin{proof} The proof follows the lines of \cite[Lemma 2.3]{Shah} adapted to our framework, and it will be achieved  by contradiction. Then, let us suppose that the thesis of the lemma fails. This means that for each $k \in \mathbb{N}$ we may find $u_k \in \mathfrak{J}_p(\lambda_0, q)(Q_1)$ and $j_k \in \mathbb{V}_{p, q}[u_k]$ such that:
  \begin{equation}\label{Eqcont}
     \mathcal{S}_{\frac{1}{2^{j_k+1}}}[u_k] > k \left(\frac{1}{2^{j_k}}\right)^{\frac{p}{p-1-q}}.
  \end{equation}
Now, for $\mathfrak{a}_k \defeq \frac{1}{2^{j_kp}}\frac{1}{\mathcal{S}^{p-2}_{\frac{1}{2^{j_k+1}}}[u_k]}$ we define the auxiliary function:
$$
   v_k(x, t) \defeq \frac{u_k\left(\frac{1}{2^{j_k}}x, \mathfrak{a}_k.t\right)}{\mathcal{S}_{\frac{1}{2^{j_k+1}}}[u_k]} \quad \mbox{in} \quad Q^{-}_1.
$$
Thus, $v_k$ fulfils:

\begin{enumerate}
  \item[\checkmark]  $v_k(0, 0) = 0$ and
  $$
    0 \leq v_k(x, t) \leq \frac{\mathcal{S}_{\frac{1}{2^{j_k}}}[u_k]}{\mathcal{S}_{\frac{1}{2^{j_k+1}}}[u_k]} \leq \mathfrak{A} \defeq 2^{\frac{p}{p-1-q}}\max\left\{1,\frac{1}{\mathfrak{C}^{\ast}_0} \right\} \quad \mbox{in} \quad Q^{-}_1.
$$
  \item[\checkmark] $\mathcal{S}_{\frac{1}{2}}[v_k] \geq 1$ $\left(\text{a consequence of}\,\,\, -(2^{-1})^{\theta}\mathfrak{a}_k \geq -(2^{-j_k-1}), \,\,\,u_k \geq 0 \,\,\,\mbox{and}\,\,\, \frac{\partial u_k}{\partial t} \geq 0\right)$.
  \item[\checkmark] $\frac{\partial v_k}{\partial t}\geq 0$ a.e. in $Q^{-}_1$.
  \item[\checkmark] $ \Delta_p v_k-\frac{\partial v_k}{\partial t} = \frac{1}{2^{j_k.p}}\frac{1}{\mathcal{S}^{p-1-q}_{\frac{1}{2^{j_k+1}}}[u_k]}\lambda_0\left(\frac{1}{2^{j_k}}x, \mathfrak{a}_k.t\right)(v_k)^q_{+}(x, t) \quad \mbox{in} \quad Q^{-}_1$.
\end{enumerate}
Therefore:
$$
   \left\|\Delta_p v_k-\frac{\partial v_k}{\partial t}\right\|_{L^{\infty}(Q^{-}_1)}  = \left\|\frac{1}{2^{j_k.p}}\frac{1}{\mathcal{S}^{p-1-q}_{\frac{1}{2^{j_k+1}}}[u_k]}\lambda_0(\cdot, \cdot)(v_k)^q_{+}\right\|_{L^{\infty}(Q^{-}_1)} \leq \mathfrak{A}^{q}.\sup_{Q_1} \lambda_0(x, t)\left(\frac{1}{k}\right)^{p-1-q} \to 0 \quad \mbox{as} \quad k \to \infty.
$$
The previous sentences together with standard compactness arguments for parabolic $p$-Laplacian operators (cf.  \cite[Lemma 14.1- p.75 and Lemma 14-(iii)-p.115]{DB93}) imply that, up to a subsequence, $v_k
\rightarrow v$ locally uniformly in
$\overline{Q^{-}_{\frac{4}{5}}}$. Furthermore, the limit fulfils:
\begin{enumerate}
  \item[\checkmark] $\Delta_p v - \frac{\partial v}{\partial t} = 0 \quad \mbox{in} \quad Q^{-}_{\frac{4}{5}}$.
  \item[\checkmark] $0\leq v \leq \mathfrak{A} \,\,\, \mbox{and} \,\,\,\frac{\partial v}{\partial t} \geq 0 \quad \text{a.e.}\quad \mbox{in} \quad Q^{-}_{\frac{4}{5}}$.
  \item[\checkmark] $\displaystyle \mathcal{S}_{\frac{1}{2}}[v] \geq 1.$
  \item[\checkmark] $v(0, t) = 0 \,\,\, \forall \,\,t \in \left(-\left(\frac{4}{5}\right)^{\theta}, 0\right]$.
\end{enumerate}
From now on we shall analyse two possibilities:

\begin{enumerate}
 \item({\large \bf Case $p=2$})

Notice that $u$ is a non-zero, non-negative caloric function in $Q^{-}_{\frac{4}{5}}$, non-decreasing in time and taking a local minimum at the origin. Hence, the strong minimum principle assures that $u$ is identically zero, which clearly yields a contradiction.

\item({\large \bf Case $p > 2$})

Due to the absence of the strong minimum principle, it will be necessary to analyse the previous limiting problem in a different manner.

Indeed, since $\frac{\partial v}{\partial t} \geq 0$, the supremum of $v$ on the $0$-level (in time) must be bigger than or equal to the ones in the corresponding negative levels. Consequently:
\begin{equation}\label{eqlevel0}
  \displaystyle \sup_{B_{\frac{1}{2}}} v(x, 0) \geq 1.
\end{equation}

Now, we claim that $v$ is time independent, in other words, $\frac{\partial v}{\partial t} \equiv 0 \quad \mbox{in} \quad Q^{-}_{\frac{4}{5}}$. Indeed, if such a claim holds, then $u$ would be a non-negative $p$-harmonic function vanishing at the origin. The strong minimum principle (see \cite{Vaz}) says that $v \equiv0$. However, this contradicts \eqref{eqlevel0}.

Let us prove the claim. Select $(x, t), (x, s) \in Q^{-}_{\frac{1}{2}}$ and by using the property of $\mathbb{V}_{p, q}[u_k]$ and the $\beta$-H\"{o}lder regularity estimate (cf. \cite[Theorem 1-p.41-42]{DB93}):
$$
\begin{array}{ccl}
  |v_k(x, t)-v_k(x, s)| & = & \frac{\left|u_k\left(\frac{1}{2^{j_k}}x, \mathfrak{a}_kt\right)-u_k\left(\frac{1}{2^{j_k}}x, \mathfrak{a}_ks\right)\right|}{\mathcal{S}_{\frac{1}{2^{j_k+1}}}[u_k]} \\
   & \leq & \mathfrak{A}\frac{\left|u_k\left(\frac{1}{2^{j_k}}x, \mathfrak{a}_kt\right)-u_k\left(\frac{1}{2^{j_k}}x, \mathfrak{a}_ks\right)\right|}{\mathcal{S}_{\frac{1}{2^{j_k}}}[u_k]} \\
   & \leq & \frac{\gamma\mathfrak{A}\|u_k\|_{L^{\infty}\left(Q^{-}_{{2^{-j_k}}}\right)}}{\mathcal{S}_{\frac{1}{2^{j_k}}}[u_k]} \left[\frac{\mathfrak{a}_k\|u_k\|^{p-2}_{L^{\infty}\left(Q^{-}_{{2^{-j_k}}}\right)}|t-s|}{\dist\left(Q^{-}_{{2^{-(j_k+1)}}}, \partial_p Q^{-}_{{2^{-j_k}}}\right)^p}\right]^{\frac{\beta}{p}},
\end{array}
$$
where $\gamma>1$ does not depend on $\|u_k\|_{L^{\infty}\left(Q^{-}_{{2^{-j_k}}}\right)}$ and the above intrinsic parabolic distance between sets satisfies:
\begin{eqnarray*}
\begin{array}{ccl}
   \dist\left(Q^{-}_{{2^{-(j_k+1)}}}, \partial_p Q^{-}_{{2^{-j_k}}}\right) & := & \displaystyle \inf_{(x, t) \in Q^{-}_{{2^{-(j_k+1)}}}, \atop{ (y, s)\in \partial_p Q^{-}_{{2^{-j_k}}}}} \left[|x-y|+\|u_k\|^{\frac{p-2}{p}}_{L^{\infty}\left(Q^{-}_{{2^{-j_k}}}\right)}|t-s|^{\frac{1}{p}}\right] \\
   & \geq & \left(\frac{1}{2^{j_k+1}}\right)^{\frac{\theta}{p}}\|u_k\|_{L^{\infty}\left(Q^{-}_{{2^{-j_k}}}\right)}^{\frac{p-2}{p}}
\end{array}
\end{eqnarray*}
Finally:
$$
\begin{array}{ccl}
  |v_k(x, t)-v_k(x, s)| & \leq  & \gamma\mathfrak{A}\left[|t-s|\mathfrak{a}_k2^{(j_k+1)\theta}\right]^{\frac{\beta}{p}} \\
   & = &  \gamma\mathfrak{A}\left[|t-s|\frac{1}{2^{j_kp}}\mathcal{S}^{2-p}_{\frac{1}{2^{j_k+1}}}[u_k]2^{(j_k+1)\theta}\right]^{\frac{\beta}{p}} \\
   & \leq & \gamma\mathfrak{A}\left[|t-s|\frac{1}{2^{j_kp}}k^{2-p}\left(\frac{1}{2^{j_k}}\right)^{\frac{p(2-p)}{p-q-1}}
   2^{(j_k+1)\theta}\right]^{\frac{\beta}{p}}\\
   & = & \gamma\mathfrak{A}\left[2^{\theta}.|t-s|k^{2-p}\right]^{\frac{\beta}{p}} \to 0 \quad \mbox{as} \quad k \to \infty.
\end{array}
$$
Therefore, $v$ is time-independent, and we finish the proof.
\end{enumerate}
\end{proof}

\begin{remark}\label{FlatHip} Notice that Lemma \ref{LemmaIter} assures that there exists a universal constant $0<\delta_0 \ll 1$ (small enough) such that if $u \in \mathfrak{J}_p(\lambda_0, q)(Q_1)$ with:
$$
   \left\|\Delta_p u - \frac{\partial u}{\partial t}\right\|_{L^{\infty}(Q^{-}_1)} = \left\|\lambda_0u_{+}^q\right\|_{L^{\infty}(Q^{-}_1)} \leq \delta_0,
$$
then:
$$
   \mathcal{S}_{\frac{1}{2^{j+1}}}[u] \leq \mathfrak{C}_0.\left(\frac{1}{2^j}\right)^{\frac{p}{p-1-q}}
$$
for all $j \in \mathbb{V}_{p, q}[u]$.
\end{remark}

We next state the growth property of the elements in the class $\mathfrak{J}_p(\lambda_0, q)(Q_1)$.  Even though the proof mainly follows the guidelines of the proof of \cite[Theorem 2.2]{Shah}, a different barrier adapted to \eqref{DCSA} should be used. We quote the full proof for the convenience of the reader.

\begin{theorem}\label{MThm} There exists a positive constant $\mathfrak{C} =  \mathfrak{C}(N, p, q, \mathfrak{M})$ such that for all $u \in \mathfrak{J}_p(\lambda_0, q)(Q_1)$ there holds:
$$
  u(x, t) \leq \mathfrak{C}.\mathfrak{d}(x, t)^{\frac{p}{p-1-q}} \quad \forall \,\, (x, t) \in Q_{\frac{1}{2}},
$$
where:
$$
\mathfrak{d}(x, t) \defeq \left\{
\begin{array}{ccl}
  \displaystyle \sup\{r\geq 0; Q_r(x, t) \subset \{u>0\}\} & \mbox{for} & (x, t)\in \{u>0\} \\
  0 & \mbox{otherwise}. &
\end{array}
\right.
$$
\end{theorem}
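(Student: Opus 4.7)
The plan is to reduce the bound on $u$ to a growth estimate at a free boundary point, then to extend the dyadic rate of Lemma~\ref{LemmaIter} from indices in $\mathbb{V}_{p, q}[u]$ to every scale. Fix $(x, t) \in Q_{1/2}$ with $u(x, t) > 0$ (else the inequality is trivial), write $r \defeq \mathfrak{d}(x, t)$, and pick $(y_0, s_0) \in \overline{Q_r(x, t)}$ with $u(y_0, s_0) = 0$, which must exist by the definition of $\mathfrak{d}$. For values of $r$ larger than a fixed universal $\rho_0$, the bound $u \leq 1 \leq \rho_0^{-\alpha_0} r^{\alpha_0}$ (with $\alpha_0 \defeq \frac{p}{p-1-q}$) already yields the theorem, so we may assume $(y_0, s_0) \in Q_{3/4}$. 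It therefore suffices to establish a universal
\[
\sup_{Q^{-}_\rho(y_0, s_0)} u \;\leq\; \mathfrak{C}\, \rho^{\alpha_0}
\]
for all $0 < \rho < 1/4$ and all free boundary points $(y_0, s_0) \in Q_{3/4}$, since then $u(x, t) \leq \sup_{Q^{-}_r(y_0, s_0)} u \leq \mathfrak{C}\,\mathfrak{d}(x, t)^{\alpha_0}$.

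To obtain the growth estimate at $(y_0, s_0)$, I would translate by considering $\tilde u(z, \tau) \defeq u(y_0 + z, s_0 + \tau)$. This function solves the same PDE with $\tilde\lambda_0(z, \tau) = \lambda_0(y_0 + z, s_0 + \tau)$, enjoys $\tilde u(0, 0) = 0$, $0 \leq \tilde u \leq 1$, and $\frac{\partial \tilde u}{\partial \tau} \geq 0$ on the relevant sub-cylinder of $Q_1$. Since the compactness/contradiction argument of Lemma~\ref{LemmaIter} relies only on these translation-invariant properties together with interior parabolic H\"older estimates, the dyadic bound
\[
\mathcal{S}_{\frac{1}{2^{j+1}}}[\tilde u] \;\leq\; \mathfrak{C}_0 \left(\tfrac{1}{2^j}\right)^{\alpha_0}
\]
persists for every $j \in \mathbb{V}_{p, q}[\tilde u]$ with $Q^{-}_{1/2^j} \Subset Q_{1/4}$.

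The remaining task is to upgrade this dyadic estimate from $j \in \mathbb{V}_{p, q}[\tilde u]$ to \emph{all} $j$. Setting $\mathfrak{A} \defeq 2^{\alpha_0}\max\{1, 1/\mathfrak{C}^{\ast}_0\}$ and $C_j \defeq 2^{j\alpha_0}\,\mathcal{S}_{\frac{1}{2^{j+1}}}[\tilde u]$, the very definition of $\mathbb{V}_{p, q}[\tilde u]$ produces the reversed inequality
\[
\mathcal{S}_{\frac{1}{2^{j+1}}}[\tilde u] \;<\; \mathfrak{A}^{-1}\,\mathcal{S}_{\frac{1}{2^{j}}}[\tilde u] \qquad \text{for } j \notin \mathbb{V}_{p, q}[\tilde u],
\]
which rephrases as $C_j < (2^{\alpha_0}/\mathfrak{A})\,C_{j-1} \leq C_{j-1}$. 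Combining this with $C_j \leq \mathfrak{C}_0$ when $j \in \mathbb{V}_{p, q}[\tilde u]$ yields the one-step recursion $C_j \leq \max\{\mathfrak{C}_0, C_{j-1}\}$. Since $C_0 = \mathcal{S}_{\frac{1}{2}}[\tilde u] \leq 1$, induction forces $C_j \leq \max\{1, \mathfrak{C}_0\}$ for every $j \geq 0$, and sandwiching an arbitrary $\rho \in (0, 1/4)$ between consecutive dyadic radii finishes the proof.

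I expect the most subtle point to be the translation step. A naive intrinsic rescaling $u \mapsto \rho^{-\alpha_0}\,u(y_0 + \rho\,\cdot\,, s_0 + \rho^\theta\,\cdot)$ preserves the equation and the structural bounds on $\lambda_0$, but it inflates the $L^\infty$-norm of the rescaled function by the factor $\rho^{-\alpha_0}$, which is exactly the quantity one is trying to estimate, so this route is circular. I would therefore avoid rescaling and work with pure translation, relying on the fact that Lemma~\ref{LemmaIter}'s proof uses only interior parabolic estimates and structural properties that survive the restriction of $Q_1$ to a translated sub-cylinder. The iterative argument in the third paragraph is also delicate: it is the precise factor $2^{\alpha_0}$ hidden in $\mathfrak{A}$, together with $u \leq 1$, that makes the recursion close and prevents the constant $C_j$ from blowing up.
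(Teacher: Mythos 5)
Your dyadic upgrade is fine and is essentially the paper's own induction: on indices $j\in\mathbb{V}_{p,q}[u]$ you invoke Lemma \ref{LemmaIter}, and on the complementary indices the defining inequality of $\mathbb{V}_{p,q}[u]$ gives the geometric decay $\mathcal{S}_{\frac{1}{2^{j+1}}}[u]\leq 2^{-\frac{p}{p-1-q}}\mathcal{S}_{\frac{1}{2^{j}}}[u]$, so $C_j\leq\max\{\mathfrak{C}_0,C_{j-1}\}$ closes exactly as in the paper. The genuine gap is in your reduction step. The quantity $\mathfrak{d}(x,t)$ is defined through \emph{full} cylinders $Q_r(x,t)$, and the touching free boundary point $(y_0,s_0)\in\overline{Q_r(x,t)}$ need not satisfy $s_0\geq t$; in fact, since $\frac{\partial u}{\partial t}\geq 0$ the zero set $\{u=0\}$ shrinks as time increases, so the contact typically occurs on the bottom or lower lateral part of $\partial Q_r(x,t)$, i.e.\ with $s_0<t$. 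In that configuration $(x,t)$ lies in the \emph{upper} semi-cylinder $Q^{+}_r(y_0,s_0)$, and your inequality $u(x,t)\leq\sup_{Q^{-}_r(y_0,s_0)}u$ is simply not available (time-monotonicity goes the wrong way: it bounds $u(x,t)$ from \emph{below} by values at earlier times). So establishing the decay only on lower semi-cylinders $Q^{-}_\rho(y_0,s_0)$, which is all that Lemma \ref{LemmaIter} and the class $\mathfrak{J}_p$ (defined via $\mathcal{S}_r[\cdot]=\sup_{Q^-_r}$) give you, does not conclude the theorem.

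The paper fills precisely this hole with an extra step you omit: after obtaining $\mathcal{S}_r[u]\leq\mathfrak{C}r^{\frac{p}{p-1-q}}$ for all $r\in(0,1)$, it constructs the supersolution
$$
\Phi(x,t)=\mathfrak{c}_1\left(\mathfrak{a}|x|^{\frac{p}{p-1}}+\mathfrak{b}\,t^{\frac{p-1-q}{(p-1)(1-q)}}\right)^{\frac{p-1}{p-1-q}}
$$
on $Q_1^{+}$, chooses $\mathfrak{c}_1$ so that $\Delta_p\Phi-\partial_t\Phi-\lambda_0\Phi^q\leq 0$, uses the already proved decay on the slice $\{t=0\}$ (and $u\leq 1$ on the lateral boundary, with $\mathfrak{a},\mathfrak{b}$ large) to get $\Phi\geq u$ on $\partial_p Q_1^{+}$, and then applies the comparison principle (Lemma \ref{comparison}) to conclude $\sup_{Q_r}u\leq\mathfrak{C}r^{\frac{p}{p-1-q}}$ over \emph{whole} cylinders centered at the vanishing point. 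Only with this whole-cylinder estimate can one bound $u(x,t)$ by the growth from a touching point lying at an earlier time, which is what the statement in terms of $\mathfrak{d}(x,t)$ requires. Your instinct to avoid the circular intrinsic rescaling by translating is reasonable (modulo minor bookkeeping so the translated cylinders fit inside $Q_1$), but without the barrier/comparison step on the upper semi-cylinder the argument does not prove the theorem.
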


\begin{proof}

 The proof will be achieved  by an inductive reasoning. First of all, we claim that:
\begin{equation}\label{EqFinalEst}
  \mathcal{S}_{\frac{1}{2^j}}[u] \leq \mathfrak{C}_0\left(\frac{1}{2^{j-1}}\right)^{\frac{p}{p-1-q}} \quad \forall \,\, j \in \mathbb{N},
\end{equation}
where $\mathfrak{C}_0$ is the constant coming from Lemma \ref{LemmaIter}. Note that if $\mathfrak{C}_0 \geq 1$, which we can suppose without loss of generality, then \eqref{EqFinalEst} holds for $j=0$. Suppose now that \eqref{EqFinalEst} holds for some $j \in \mathbb{N}$. We will verify the $(j+1)^{\text{th}}$ step of induction. In fact, if $j \in \mathbb{V}_{p, q}[u]$ then the result holds directly by Lemma \ref{LemmaIter}. On the other hand, if \eqref{EqFinalEst} fails, then we obtain by using the inductive hypothesis, the following estimates:

$$
   \mathcal{S}_{\frac{1}{2^{j+1}}}[u] \leq \left(\frac{1}{2}\right)^{\frac{p}{p-1-q}}.\mathcal{S}_{\frac{1}{2^j}}[u] \leq  \mathfrak{C}_0.\left(\frac{1}{2}\right)^{\frac{p}{p-1-q}}\left(\frac{1}{2^{j-1}}\right)^{\frac{p}{p-1-q}} = \mathfrak{C}_1.\left(\frac{1}{2^{j}}\right)^{\frac{p}{p-1-q}}.
$$
Therefore, \eqref{EqFinalEst} holds for all $j \in \mathbb{N}$.

Now, in order to finish the proof for a continuous parameter $r \in (0, 1)$ let $j \in \mathbb{N}$ be the greatest integer such that $\frac{1}{2^{j+1}} \leq r < \frac{1}{2^j}$. Then:

$$
 \mathcal{S}_{r}[u] \leq \mathcal{S}_{\frac{1}{2^{j}}}[u] \leq \mathfrak{C}_0.\left(\frac{1}{2^{j-1}}\right)^{\frac{p}{p-1-q}} \leq \mathfrak{C}(N, p, q, \mathfrak{M}).r^{\frac{p}{p-1-q}}.
$$

Finally, in order to obtain an estimate for $u$ over the whole cylinder we will use a suitable barrier function from above. Let:

$$
  \Phi(x, t)= \mathfrak{c}_1.\left(\mathfrak{a}.|x|^{\frac{p}{p-1}}+\mathfrak{b}.t^{\frac{p-1-q}{(p-1)(1-q)}}\right)^{\frac{p-1}{p-1-q}},
$$
where the first constant:
$$
  \displaystyle \mathfrak{c}_1 \defeq  \left(\frac{\mathfrak{m}}{\Big( N+\frac{pq}{p-1-q}\Big)} \Big(\frac{p-1-q}{\mathfrak{a}.p}\Big)^{p-1}\right)^{\frac{1}{p-1-q}}
$$
is chosen so that:
$$
  \Delta_p \Phi -\frac{\partial \Phi}{\partial t} - \lambda_0(x, t).\Phi^{q}\leq 0 = \Delta_p u- \frac{\partial u}{\partial t}- \lambda_0(x, t)u^{q}_{+}
$$
in $Q_1^{+}$, for all $\mathfrak{a}, \mathfrak{b}>0$. Finally, pick $\mathfrak{b}>\mathfrak{a}$ with $\mathfrak{a}$ large enough so that $\Phi \geq u$ on $\partial_p Q_1^{+},$ where we have used that $\mathcal{S}_r[u]\leq \mathfrak{c}.r^{\frac{p}{p-1-q}}$ for the estimate on $\{t = 0\}$. Consequently, the comparison principle (Lemma \ref{comparison}) implies that $\Phi \geq  u \,\,\, \mbox{in} \,\,\,Q_1^{+}$. Therefore:
\begin{equation}\label{EqFinEst}
  \displaystyle \sup_{Q_r} u(x, t) \leq  \mathfrak{C}(N, p, q, \mathfrak{m}).r^{\frac{p}{p-1-q}}.
\end{equation}
\end{proof}

We now are ready to prove the main result of the article.

\begin{proof}[\textbf{Proof of Theorem \ref{MThm0}}]\label{rem1} In order to prove  Theorem \ref{MThm0}, we have to reduce the hypothesis presented on it to the framework of Theorem \ref{MThm}. We assume without loss of generality that $\mathrm{K} = \overline{Q_1} \subset \Omega_T$. For $(x, t) \in \{u>0\} \cap \mathrm{K}$ let $\mathfrak{d}(x, t)$ the distance coming from  Theorem \ref{MThm}. For $(x_0, t_0) \in \partial \{u>0\} \cap \mathrm{K}$ let us define:
$$
     v(y, s)\defeq \frac{u\left( x_0 + \mathfrak{R}_0 y, t_0+\mathfrak{R}_0^{\theta}s\right)}{\kappa_0} \quad \mbox{in} \quad Q_1
$$
for $\kappa_0, \,\mathfrak{R}_0>0$ constants to be determined \textit{a posteriori}. From the equation satisfied by $u$, we easily verify that $v$ fulfils in the weak sense:
\begin{equation}\label{eqcomp}
  \Delta_p v - \frac{\partial v}{\partial t} = \hat{\lambda}_0(x, t).v^q_{+}(y, s),
\end{equation}for an appropriate bounded function $\hat{\lambda}_0(x, t)$. Now, let $\delta_{\lambda_0, p, q} = \delta_0>0$ be the greatest universal constant, granted by the Remark \ref{FlatHip} such the Lemma \ref{LemmaIter} holds provided:
$$
  \left\|\Delta_p v- \frac{\partial v}{\partial t}\right\|_{L^{\infty}(Q_1)} = \left\|\hat{\lambda_0} v_{+}^q\right\|_{L^{\infty}(Q_1)} \leq \delta_{\lambda_0, p, q}.
$$
Then, we make the following choices in the definition of $v$:
$$
   \kappa_0 \defeq \|u\|_{L^\infty(\Omega_T)} \quad \mbox{and} \quad \mathfrak{R}_0 := \min\left\{1, \frac{\text{dist}(\mathrm{K}, \partial_p \Omega_T)}{2}, \sqrt[p]{\kappa_0}, \sqrt[p]{\frac{\delta_0.\kappa_0^{p-1-q}}{\mathfrak{M}}}\right\}.
$$
Finally, with such selections $v$ fits into the framework of Theorem \ref{MThm}.
\end{proof}

 \vspace{0.4cm}

\begin{remark}In view of previous results, we must highlight the relationship between regularity coming from dead-core solutions and the one coming from the classical Schauder theory. For this end, let us suppose that $u$ is a classical solution to
\begin{equation}\label{eqDCvsST}
    \Delta u - \frac{\partial u}{\partial t}= \lambda_0(x, t)u^{q}_{+}(x, t) \quad \text{in} \quad Q_1,
\end{equation}
where $0<q<1$ and $\lambda_0 \in C^{0, q}(Q_1)$. Thus, under such assumptions the Schauder theory assures that solutions to \eqref{eqDCvsST} are $C_{\text{loc}}^{2 +q, \frac{2 +q}{2}}(Q_1)$ (particularly at free boundary points). On the other hand, our main Theorem  \ref{MThm} claims that $u$ is $C^{\kappa + \alpha , \frac{\kappa+\alpha}{2}}$ at free boundary points, where
$$
  \kappa \defeq \left\lfloor \frac{2}{1-q}\right\rfloor \quad \text{and} \quad \alpha :=\frac{2}{1-q} - \left\lfloor\frac{2}{1-q}\right\rfloor.
$$
Nevertheless, notice that for any $0<q<1$ one has
$$
  \frac{2}{1-q} > 2+q,
$$
which means that dead-core solutions are more regular, along free boundary points, that the best regularity result coming from classical regularity theory (compare with \cite{SP} for improved regularity estimates in the context of flat solutions and \cite[Theorem 1.1]{daSO} for similar estimates in the context of fully nonlinear parabolic dead core problems).
\end{remark}

\vspace{0.4cm}

Next, we shall analyse the regularity behaviour for weak solutions to the critical case $q=p-1$:
\begin{equation}\label{eqUCP}
  \Delta_p \, u - \frac{\partial u}{\partial t} = \lambda_0(x, t).u^{p-1}_{+}(x, t) \quad \mbox{in} \quad \Omega_T,
\end{equation}
where $0<\mathfrak{m} \leq \lambda_0 \leq \mathfrak{M}$ and $1<p< \infty$.

An interesting issue in this context is the following: Does the critical case permit the formation of dead-core regions? By means of an energy argument, we shall prove that a non-negative solution to \eqref{eqUCP} cannot vanish at interior points, unless it is identically zero.

\begin{theorem}[{\bf Strong Maximum Principle}]\label{strongmp} Let $u$ be a non-negative weak solution to \eqref{eqUCP} for $2\leq p< \infty$. If there exists a point $(x_0, t_0) \in \Omega_T$ such that $u(x_0, t_0)=0$, then $u \equiv 0$ in $\Omega_T$.
\end{theorem}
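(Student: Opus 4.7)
The strategy is to show, by contradiction, that the zero set $\mathcal{Z} \defeq \{(x,t)\in \Omega_T : u(x,t)=0\}$ coincides with all of $\Omega_T$. Since bounded weak solutions of \eqref{eqUCP} are locally H\"older continuous (cf.\ \cite{DB93}), $\mathcal{Z}$ is closed in $\Omega_T$, and $(x_0,t_0)\in \mathcal{Z}$ by hypothesis. Because $\Omega_T$ is connected, it suffices to prove that $\mathcal{Z}$ is also open.

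The first and key step is to produce, at any point $(y_0,s_0)\in \mathcal{Z}$, an energy inequality tailored to the critical balance $q=p-1$. Fix a backward cylinder $Q_r^-(y_0,s_0)\Subset \Omega_T$ and a spatial cutoff $\eta \in C_c^\infty(B_r(y_0))$ with $\eta\equiv 1$ on $B_{r/2}(y_0)$ and $|\nabla \eta|\le C/r$. Testing the Steklov-averaged weak formulation of \eqref{eqUCP} with $u\,\eta^p$, integrating by parts in space, and applying Young's inequality to absorb the cross term $p\,u\,\eta^{p-1}|\nabla u|^{p-2}\nabla u\cdot\nabla \eta$ yields the Caccioppoli-type estimate
$$
\tfrac{1}{2}\tfrac{d}{dt}\!\int u^2\eta^p\,dx \,+\, \tfrac{1}{2}\!\int \eta^p|\nabla u|^p\,dx \,+\, \mathfrak{m}\!\int u^p\eta^p\,dx \,\le\, C\!\int u^p |\nabla \eta|^p\,dx.
$$
The decisive feature here is that \emph{both} the absorption term on the left and the cutoff error on the right carry $u$ to the same power $p$; this matching is available only at the critical exponent $q=p-1$, so the competition between them is genuinely scale-invariant.

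Next, integrating the above in time over $[s_0-\tau,s_0]$ for a suitable $\tau$ and iterating on dyadic backward cylinders, combined with a Moser-type $L^p$-to-$L^\infty$ upgrade for $p$-parabolic equations (cf.\ \cite[Ch.~V]{DB93}) and the strict positivity $\lambda_0\ge \mathfrak{m}>0$, one extracts a contractive bound on the sup-norms. Coupled with the continuity of $u$ and the condition $u(y_0,s_0)=0$, this forces $u\equiv 0$ on some cylinder $Q_{r^\ast}^-(y_0,s_0)$. Since $(y_0,s_0)\in \mathcal{Z}$ was arbitrary, a slicing/covering argument along time slices propagates the zero backward to $\Omega\times (0,t_0]$. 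Finally, once $u(\cdot,t_0)\equiv 0$ in $\Omega$, forward uniqueness for \eqref{eqUCP}---valid because the map $s\mapsto s^{p-1}$ is locally Lipschitz on bounded sets for $p\ge 2$---extends the conclusion to $\Omega\times [t_0,T)$, so $u\equiv 0$ in $\Omega_T$.

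The main obstacle is making the iteration genuinely contractive: the bare Caccioppoli inequality only balances $|\nabla \eta|^p\sim r^{-p}$ against $\mathfrak{m}$, so a contraction factor strictly less than one is not automatic. One must carefully exploit the positivity of $\mathfrak{m}$ in tandem with the $(p-1)$-homogeneity of the reaction, keeping track of how the cutoff error compares to the absorption on each dyadic scale. This is exactly where the criticality of the exponent is indispensable, and why the same line of argument correctly fails in the sub-critical regime $q<p-1$ treated earlier in the paper (where dead cores do appear).
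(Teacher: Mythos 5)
Your Caccioppoli inequality is computed correctly (the absorption term does land on the good side), but the heart of your argument --- that the local estimate, a Moser-type upgrade and the positivity of $\mathfrak{m}$ yield a ``contractive bound on the sup-norms'' forcing $u\equiv 0$ on a backward cylinder around any zero point --- is asserted rather than proved, and it is exactly the content of the theorem. The right-hand side $C\int u^{p}|\nabla\eta|^{p}$ lives on the annulus where the cutoff varies, and nothing in your scheme makes $u$ small there; as you yourself note, the balance between $\mathfrak{m}$ and $|\nabla\eta|^{p}\sim r^{-p}$ gives no contraction factor below one, and no amount of dyadic iteration starts without smallness on the larger cylinder, which is what you are trying to establish. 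Worse, for $p>2$ no purely local, boundary-data-free argument of this type can work: the diffusion is degenerate and propagates with finite speed (Barenblatt-type profiles are non-negative local weak solutions of the homogeneous equation vanishing on open sets, and the absorption term $\lambda_0 u^{p-1}$ only reinforces vanishing), so positivity cannot be spread from a neighbourhood analysis alone. Your final steps inherit the same problem: propagating the zero backward to all of $\Omega\times(0,t_0]$ by a covering argument, and forward by ``uniqueness'' using the local Lipschitz character of $s\mapsto s^{p-1}$, both require control of $u$ on $\partial\Omega\times(0,T)$, which a local weak solution in $\Omega_T$ does not provide.

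The paper's route is entirely different and global: it tests equation \eqref{eqUCP} with $u$ over the whole of $\Omega$, obtaining
$$
\frac12\frac{d}{dt}\int_{\Omega}u^{2}\,dx \;\le\; -\int_{\Omega}|\nabla u|^{p}\,dx+\mathfrak{M}\int_{\Omega}|u|^{p}\,dx,
$$
and then invokes a Poincar\'e-type (variational) relation between $\mathfrak{M}$ and $\Omega$ to conclude that $t\mapsto\int_{\Omega}u^{2}\,dx$ is non-increasing, from which the vanishing at $(x_0,t_0)$ is used to force $u\equiv0$. Whatever one thinks of the implicit hypotheses there (an eigenvalue-type restriction on $\mathfrak{M}$ and control of boundary terms), the mechanism is a single global energy identity, not an openness/connectedness argument with local iteration. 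To salvage your approach you would need either the full strength of an intrinsic Harnack-type positivity spreading adapted to the critical absorption (which is delicate precisely because $p>2$), or you should switch to the global energy viewpoint; as written, the proposal has a genuine gap at its central step.
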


\begin{proof} From standard energy estimates we have:
$$
\frac{1}{2}\frac{d}{dt} \displaystyle \int_{\Omega} |u(x, t)|^2 dx+\int_{\Omega} |\nabla u|^p dx  = \int_{\Omega} \lambda_0(x, t)|u|^p dx \leq \mathfrak{M}\int_{\Omega} |u|^p dx.
$$
Moreover, by using the variational characterization of $\mathfrak{M}$, for example, via Poincar\'{e}'s inequality we obtain:
$$
   \frac{1}{2}\frac{d}{dt} \displaystyle \int_{\Omega} |u(x, t)|^2 dx \leq -\int_{\Omega} |\nabla u|^p dx + \mathfrak{M}\int_{\Omega} |u|^p dx \leq 0.
$$
Finally, we conclude that $u(x, t) = 0$ in $ \Omega_T$, since $u(x_0, t_0) = 0$.
\end{proof}

\begin{example} Theorem \ref{strongmp} assures that non-trivial weak solutions must be strictly positive when $q = p- 1$. As a matter of fact, fixed any direction $i = 1, \cdots, N$ and a constant $\lambda_0>0$. We then have that:

$$
u(x, t) \defeq \left\{
\begin{array}{lcc}
   e^{\sqrt[p]{\frac{\lambda_0}{p-1}}. x_i} & \mbox{for} & p\geq 2 \\
   \sqrt[2-p]{(p-2)\lambda_0.t} & \mbox{for} & p > 2\\
    e^{\sqrt{\lambda_0}x_i} + e^{-\lambda_0.t} & \mbox{for} & p=2.
\end{array}
\right.
$$
is a smooth and strictly positive solutions to:
$$
     \Delta_p u(x, t) - \frac{\partial u}{\partial t}(x, t) = \lambda_0.u^{p-1}(x, t) \quad \mbox{in} \quad \Omega_T.
$$
\end{example}

Finally, using Lemma \ref{LemmaIter} we are able to prove a similar growth rate for the gradient of a particular family of functions.

In order to prove the gradient estimate we must restrict ourselves to the following class of functions:

\begin{definition} We say that $v \in \mathfrak{R}^{\infty}_{p}(Q_1)$ provided:
\begin{enumerate}
  \item $v$ is a weak solution to:
  $$
    \Delta_p v- \frac{\partial v}{\partial t} = f \in L^{\infty}(Q_1)
  $$
  and fulfils the following \textit{a priori} estimate:
$$
  \mathcal{S}_{\frac{1}{2}}[|\nabla v(x,t)|] \leq \mathfrak{C}(N, p)\left[\|v\|_{L^{\infty}(Q_1)}+ \left\|f\right\|^{\frac{1}{p-1}}_{L^{\infty}(Q_1)}\right].
$$

  \item $\mathcal{S}_{1}[|\nabla v(x,t)|] \leq 1$.

  \item For each $j \in \mathbb{N}$ there exists a universal constant $\kappa_j>0$ with $\displaystyle \liminf_{k \to \infty} \kappa_j>0$ such that:
  $$
  \displaystyle \sup_{B_{\frac{1}{2^{j+1}}}\times \left(-\left(\frac{1}{2}\right)^{\theta}\alpha_j, 0\right)} |\nabla v(x, t)|\geq \kappa_j\mathcal{S}_{\frac{1}{2^{j+1}}}[|\nabla v(x,t)|] \quad \forall\,\, j \in \mathbb{N},
  $$
  where $\alpha_j \defeq \frac{1}{2^{2j}}.\frac{1}{\mathcal{S}^{p-2}_{\frac{1}{2^{j+1}}}[|\nabla v(x,t)|]}$.
\end{enumerate}
\end{definition}

Observe that for $p=2$ the class $\mathfrak{R}^{\infty}_{p}(Q_1) \neq \emptyset$. Moreover, in this case $\kappa_j = 1$ for all $j \in \mathbb{N}$ (compare with \cite[Section 2.3]{SP}, \cite[Corollary 4.1]{daSO}, \cite[Section 5 and 6]{ST} and the references therein).

\begin{lemma}\label{IRresult2} If $\mathfrak{R}^{\infty}_{p}(Q_1)$ is not empty. Then, there exists a positive constant $\mathfrak{C}_1 =  \mathfrak{C}_1(N, p, q, \mathfrak{M})$ such that for all $u \in \mathfrak{J}_p(\lambda_0, q)(Q_1) \cap \mathfrak{R}^{\infty}_{p}(Q_1)$ there holds:
$$
  |\nabla u(x, t)| \leq \mathfrak{C}_1.\mathfrak{d}(x, t)^{\frac{1+q}{p-1-q}} \quad \forall (x,t)\in \,\, Q_{\frac{1}{2}},
$$
\end{lemma}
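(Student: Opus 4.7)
The plan is to adapt the two-step scheme of Lemma \ref{LemmaIter} and Theorem \ref{MThm}, replacing $u$ by $|\nabla u|$ throughout and exploiting the three structural properties packaged in the class $\mathfrak{R}^{\infty}_p(Q_1)$. Dimensionally this is forced: differentiating the growth rate $r^{p/(p-1-q)}$ of Theorem \ref{MThm} in space gives exactly the target exponent, since $\tfrac{p}{p-1-q}-1=\tfrac{1+q}{p-1-q}$; moreover, the $\alpha_j$ appearing in property (3) of the definition of $\mathfrak{R}^{\infty}_p$ is exactly the intrinsic time scaling that keeps the $p$-parabolic operator normalised under the \emph{gradient} rescaling (not the solution rescaling used previously).

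The first step is to prove a ``gradient Lemma \ref{LemmaIter}'', namely the dyadic decay $\mathcal{S}_{1/2^{j+1}}[|\nabla u|]\leq \mathfrak{C}_0\,(1/2^{j})^{(1+q)/(p-1-q)}$ for every $j$ lying in a suitable admissible set $\mathbb{W}_{p,q}[u]\subset \mathbb{N}\cup\{0\}$ defined analogously to $\mathbb{V}_{p,q}[u]$ but with $|\nabla u|$ and the new exponent. I would argue by contradiction: for sequences $u_k\in \mathfrak{J}_p(\lambda_0,q)\cap \mathfrak{R}^{\infty}_p$ and $j_k\in \mathbb{W}_{p,q}[u_k]$ violating the estimate, introduce the intrinsic rescaling
\[
v_k(y,s):=\frac{u_k\!\left(2^{-j_k}y,\ \alpha_{j_k}s\right)}{2^{-j_k}\,\mathcal{S}_{1/2^{j_k+1}}[|\nabla u_k|]}\quad\text{in }Q^{-}_{1},
\]
with $\alpha_{j_k}$ as in property (3). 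A direct computation shows that $v_k$ solves $\Delta_p v_k-\partial_t v_k=\tilde f_k (v_k)_+^{q}$ with $\|\tilde f_k\|_{L^{\infty}}=O(k^{-(p-1-q)})$; admissibility of $j_k$ yields $\|\nabla v_k\|_{L^{\infty}(Q^{-}_{1})}\lesssim 1$; and property (3), translated to $v_k$, produces $\sup_{Q^{-}_{1/2}}|\nabla v_k|\geq \kappa_{j_k}$ with $\liminf\kappa_{j_k}>0$. Combined with the inherited facts $v_k(0,0)=0$, $v_k\geq 0$ and $\partial_t v_k\geq 0$, this sets up the very same dichotomy treated in Lemma \ref{LemmaIter}.

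In the second step I would invoke the intrinsic interior $C^{1,\alpha}$ theory for degenerate parabolic equations (DiBenedetto--Urbano) to extract a subsequence $v_k\to v$ in $C^{1}_{\text{loc}}(Q^{-}_{4/5})$. The limit $v$ is $p$-caloric, non-negative, time-monotone and vanishes at the origin. Splitting into $p=2$ and $p>2$ exactly as in Lemma \ref{LemmaIter} (the second case requiring the preliminary time-independence estimate via the $\beta$-Hölder bound) and invoking Vázquez's strong minimum principle for the $p$-Laplacian, I conclude $v\equiv 0$. This is incompatible with $\sup_{Q^{-}_{1/2}}|\nabla v|\geq \kappa_{\infty}>0$, closing the contradiction and establishing the admissible-scale estimate. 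A direct induction mirroring the one in Theorem \ref{MThm} (by tuning the constant defining $\mathbb{W}_{p,q}[u]$ so that failure of admissibility at scale $j$ yields automatic decay with factor $2^{-(1+q)/(p-1-q)}$) upgrades the bound to all dyadic scales, and bridging dyadic to continuous radii yields $\mathcal{S}_r[|\nabla u|]\leq \mathfrak{C}_1 r^{(1+q)/(p-1-q)}$ for every $0<r<1/2$ when $(0,0)$ is a free boundary point. The pointwise bound at a general $(x,t)\in Q_{1/2}$ in terms of $\mathfrak{d}(x,t)$ then follows by fixing a free boundary point $(x_0,t_0)$ at intrinsic distance comparable to $r=\mathfrak{d}(x,t)$, renormalising about $(x_0,t_0)$ as in the proof of Theorem \ref{MThm0} to stay within $\mathfrak{J}_p\cap \mathfrak{R}^{\infty}_p$, and applying the decay just obtained.

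The crux, and the main obstacle, is the compactness step: the conclusion $v\equiv 0$ in the limit only gives a contradiction if the lower bound on $\sup|\nabla v_k|$ furnished by property (3) is preserved, which requires genuine $C^{1}$-convergence, not mere $C^{0}$-convergence. This is precisely why the a priori estimate in property (1) of $\mathfrak{R}^{\infty}_p$ must be available with a constant uniform along the rescalings, and where the intrinsic $C^{1,\alpha}$ estimates for the parabolic $p$-Laplacian are unavoidable. Once this regularity transfer is secured, the argument proceeds in perfect analogy with Lemma \ref{LemmaIter} and Theorem \ref{MThm}.
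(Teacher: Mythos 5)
Your scheme is workable in outline, but it is a genuinely different (and considerably heavier) route than the one the paper takes. You replay the compactness machinery of Lemma \ref{LemmaIter} at the gradient level: rescale, extract a limit, prove time--independence, invoke V\'azquez's strong minimum principle, and then need $C^{1}$-convergence (hence uniform interior $C^{1,\alpha}$ estimates from DiBenedetto's theory, an ingredient not contained in the definition of $\mathfrak{R}^{\infty}_{p}(Q_1)$) so that the lower bound from property (3) survives in the limit. The paper avoids all of this: assuming the dyadic estimate \eqref{eqEstGrad1} fails with constant $j\to\infty$, it performs exactly your rescaling $v_j(x,t)=2^{j}u_j(2^{-j}x,\alpha_j t)/\mathcal{S}_{1/2^{j+1}}[|\nabla u_j|]$, but then uses the already established sup-decay of $u$ from Theorem \ref{MThm} (estimate \eqref{EqFinEst}) together with the contradiction hypothesis to conclude that $\|v_j\|_{L^{\infty}(Q_1)}\leq \mathfrak{C}/j$ and $\|\hat{\lambda_0}(v_j)_+^{q}\|_{L^{\infty}(Q_1)}\leq \mathfrak{M}\mathfrak{C}^{q}j^{-(p-1)}$, i.e.\ the rescaled functions are uniformly \emph{small}, not merely bounded; the a priori estimate in property (1) of $\mathfrak{R}^{\infty}_{p}$ then transfers this smallness to the gradient, $\mathcal{S}_{1/2}[|\nabla v_j|]\leq \mathfrak{C}^{\ast}/j\to 0$, which contradicts property (3) directly, with no limit equation, no strong maximum principle, and no $C^{1}$-compactness. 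So what you identify as ``the crux'' is precisely the step the paper's argument is engineered to bypass: property (1) itself is the regularity-transfer device. Your approach buys nothing extra here and incurs the burden of justifying uniform gradient H\"older estimates and a gradient-level version of the time-independence computation; conversely, if you simply invoke Theorem \ref{MThm} to get $\|v_k\|_{\infty}=O(1/k)$ (which your setup permits), your entire limiting apparatus becomes unnecessary and your proof collapses to the paper's one-paragraph argument.
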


\begin{proof} As before, it is enough to prove the following estimate:
\begin{equation}\label{eqEstGrad1}
  \mathcal{S}_{\frac{1}{2^{j+1}}}[|\nabla u(x,t)|] \leq \max\left\{\mathfrak{C}_2.\left(\frac{1}{2^j}\right)^{\frac{1+q}{p-1-q}}, \left(\frac{1}{2}\right)^{\frac{1+q}{p-1-q}}\mathcal{S}_{\frac{1}{2^{j}}}[|\nabla u(x,t)|]\right\},
\end{equation}
for all $j \in \mathbb{N}$ and a constant $\mathfrak{C}_2 = \mathfrak{C}_2(N, p, q, \mathfrak{M})$.

Let us suppose that \eqref{eqEstGrad1} does not hold. Then, there exists $u_j \in \mathfrak{J}_p(\lambda_0, q)(Q_1) \cap \mathfrak{R}^{\infty}_{p}(Q_1)$ such that:
\begin{equation}\label{eqEstGradHip}
  \mathcal{S}_{\frac{1}{2^{j+1}}}[|\nabla u_j(x,t)|] \geq \max\left\{j\left(\frac{1}{2^j}\right)^{\frac{1+q}{p-1-q}}, \left(\frac{1}{2}\right)^{\frac{1+q}{p-1-q}}\mathcal{S}_{\frac{1}{2^{j}}}[|\nabla u_j(x,t)|]\right\}.
\end{equation}
Next, we define the auxiliary normalized and scaled function:
$$
   v_j(x, t) \defeq \frac{2^ju_j\left(\frac{1}{2^j}x, \alpha_j t\right)}{\mathcal{S}_{\frac{1}{2^{j+1}}}[|\nabla u_j(x,t)|]}.
$$
Notice that due to \eqref{eqEstGradHip} we get:
$$
   \alpha_j <\left(\frac{1}{j}\right)^{p-1-q}.\frac{1}{2^{j(1-q)}} \to 0 \quad \text{as} \quad j \to \infty.
$$

Next, using \eqref{EqFinEst} and \eqref{eqEstGradHip} we obtain:
$$
  0\leq  v_j(x, t) \leq \frac{2^j\mathfrak{C}(2^{-j})^{\frac{p}{p-1-q}}}{\mathcal{S}_{\frac{1}{2^{j+1}}}[|\nabla u_j(x,t)|]}\leq \frac{\mathfrak{C}}{j} \quad \mbox{for} \quad (x, t) \in Q_1.
$$
Furthermore,
\begin{enumerate}
  \item[\checkmark] $ \Delta_p v_j - \frac{\partial v_j}{\partial t} = \hat{\lambda_0}(x, t)(v_j)^{q}_{+}(x, t) \quad \mbox{in} \quad Q_1$
in the weak sense, where
$$
  \hat{\lambda_0}(x, t) \defeq \frac{1}{2^{j(1+q)}}\frac{1}{\mathcal{S}^{p-1-q}_{\frac{1}{2^{j+1}}}[|\nabla u_j(x,t)|]}\lambda_0\left(\frac{1}{2^{j}}x, \alpha_j.t\right).
$$

  \item[\checkmark] $\displaystyle \mathcal{S}_{\frac{1}{2}}[|\nabla v_j(x,t)|] \geq \kappa_j>0$ due to $v_j \in \mathfrak{R}^{\infty}_{p}(Q_1)$.

\end{enumerate}

Consequently, we have that:
$$
   \left\|\hat{\lambda_0}.(v_j)^{q}_{+}\right\|_{L^{\infty}(Q_1)} \leq \mathfrak{M}.\mathfrak{C}^{q}.\left(\frac{1}{j}\right)^{p-1}.
$$
Finally, by using the \textit{a priori} gradient estimate from $\mathfrak{R}^{\infty}_{p}(Q_1)$ we obtain:
\begin{eqnarray*}
 \kappa_j \leq \mathcal{S}_{\frac{1}{2}}[|\nabla v_j(x,t)|] \leq \mathfrak{C}(N, p)\left[\|v_j\|_{L^{\infty}(Q_1)}+ \left\|\hat{\lambda_0}(v_j)^{q}_{+}\right\|^{\frac{1}{p-1}}_{L^{\infty}(Q_1)}\right] \leq \mathfrak{C}^{\ast}.\frac{1}{j} \to 0 \quad \mbox{as} \quad j \to \infty,
\end{eqnarray*}
which is a contradiction. Therefore the proof is ended.
\end{proof}


\section{Non-degeneracy and its consequences}\label{NondFP}

\hspace{0.6cm}Throughout this section we will deliver the proof of Non-degeneracy property for weak solutions. Thereafter, we will present  important consequences of this fundamental property.

\begin{proof}[\textbf{Proof of Theorem \ref{NonDeg}}] Notice that by continuity of weak solutions it suffices to show the thesis for points in $\{u > 0\}$. Fix $(x_0, t_0) \in \{u > 0\}$. Let $r >0$ be so that $Q_r(x_0, t_0) \subset Q_1$, and consider the scaled functions:

$$
   u_r(x,t) =  \frac{u(x_0 + rx, t_0 + r^{\theta}t)}{r^{\frac{p}{p-1-q}}}, \quad (x, t)\in \overline{Q^{-}_1}.
$$
Observe that:
$$
   \Delta_p u_r -\frac{\partial u_r}{\partial t} - \lambda_0(x_0 + rx, t_0 + r^{\theta}t) u_r^{q}  = 0 \quad \mbox{in } Q_1^{-}.
$$
We introduce a comparison function defined in $\overline{Q^{-}_1}$:
$$
   \hat{\Phi}(x, t) \defeq  \mathfrak{c}.\Big( |x|^{\frac{p}{p-1}} + (-t)^{\frac{p-1-q}{(p-1)(1-q)}}\Big)^{\frac{p-1}{p-1-q}},
$$
where
\begin{equation}\label{choice c_1}
\mathfrak{c} = \min \Big\{\Big[\frac{\mathfrak{m}(1-q)}{2}\Big]^{\frac{1}{1-q}}, \Big[ \frac{\mathfrak{m}}{2}\Big(\frac{p-1-q}{p}\Big)^{p-1}\Big(N+\frac{pq}{p-1-q}\Big)\Big]^{\frac{1}{p-1-q}}\Big\}.
\end{equation}
It follows from \eqref{choice c_1} that:
$$
  \Delta_p \hat{\Phi} - \frac{\partial \hat{\Phi}}{\partial t}-\lambda_0(x, t).\hat{\Phi}^{q}\leq 0.
$$
Now, if $u_r \leq \hat{\Phi}$ on the  parabolic boundary of $Q^{-}_1$, then Lemma \ref{comparison} would imply $u_r \leq \hat{\Phi}$ in $Q^{-}_1$. But this contradicts $\hat{\Phi}(0, 0)=0 < u_r(0, 0)$. Hence, there should be a point $(x^{\prime}, t^{\prime})$ on $Q^{-}_1$ so that:
$$
  u_r(x^{\prime}, t^{\prime}) \geq \hat{\Phi}(x^{\prime}, t^{\prime}).
$$
Scaling back proves the result.
\end{proof}

Such a Non-degeneracy property and the growth rate for weak solutions to \eqref{DCSA} will drive us to  establish some measure-theoretical properties of the free boundary. We start by showing a property of positive density.

\begin{corollary}[{\textbf{Positive Lebesgue density of $\{u > 0\}$}}]\label{positive density f} Let $u$ be the weak solution to \eqref{DCSA}. Then, there exists a  positive constant $\varrho= \varrho(N, p, \lambda_0, \|u\|_{L^{\infty}(Q_1)})$ such that for all $(x_0,t_0)\in  \overline{\{u > 0\}}$ and $0 <r< 1$ so that $Q_r(x_0,t_0)\subset Q_{\frac{1}{2}}$, the inclusion:
\begin{equation}
    Q_{\varrho r}(x^{\prime},t^{\prime})\subset Q_r(x_0,t_0)\cap \{u> 0\},
\end{equation}
holds for some $(x^{\prime},t^{\prime}) \in Q^{-}_r(x_0,t_0)$.

\end{corollary}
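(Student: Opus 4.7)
The plan is to combine the non-degeneracy statement (Theorem \ref{NonDeg}) with the sharp growth estimate (Theorem \ref{MThm}/Theorem \ref{MThm0}) in the standard way for free-boundary problems: non-degeneracy forces $u$ to attain a prescribed lower bound somewhere near $(x_0,t_0)$, while the optimal upper growth forces any such ``large'' value to emanate from a point that is already well inside the positivity set. By continuity of $u$ it is enough to handle $(x_0,t_0)\in\{u>0\}$; after a routine rescaling of the type used in the proof of Theorem \ref{MThm0}, we may assume $u$ is in the class $\mathfrak{J}_p(\lambda_0,q)(Q_1)$, so that Theorem \ref{MThm} delivers
$$
u(x,t)\le \mathfrak{C}\,\mathfrak{d}(x,t)^{\frac{p}{p-1-q}}\quad \text{for every } (x,t)\in Q_{1/2}.
$$

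The first concrete step is to apply Theorem \ref{NonDeg} not on $Q^{-}_{r}(x_0,t_0)$ itself, but on the slightly smaller cylinder $Q^{-}_{r/2}(x_0,t_0)$. This will produce a point $(x',t')\in\overline{Q^{-}_{r/2}(x_0,t_0)}$ with
$$
u(x',t')\ \ge\ \mathfrak{C}^{\ast}_0\left(\frac{r}{2}\right)^{\frac{p}{p-1-q}}.
$$
The shrinking factor $\tfrac12$ is what reserves the geometric ``room'' needed in the next step, both in space (since $|x'-x_0|\le r/2$) and in the intrinsically scaled time variable (since $t_0-(r/2)^\theta\le t'\le t_0$).

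Combining the two bounds at $(x',t')$ will then yield
$$
\mathfrak{C}^{\ast}_0\left(\tfrac{r}{2}\right)^{\frac{p}{p-1-q}}\le u(x',t')\le \mathfrak{C}\,\mathfrak{d}(x',t')^{\frac{p}{p-1-q}},
$$
hence $\mathfrak{d}(x',t')\ge \tilde\varrho\,r$ for the explicit constant $\tilde\varrho:=\tfrac12(\mathfrak{C}^{\ast}_0/\mathfrak{C})^{(p-1-q)/p}$. I would then pick $\varrho\in(0,\tilde\varrho]$ small enough to ensure $|x'-x_0|+\varrho r\le r$ and $2^{-\theta}+\varrho^\theta\le 1$; these two elementary inequalities guarantee that $Q_{\varrho r}(x',t')\subset Q_r(x_0,t_0)$ in the spatial and in the intrinsic-time components respectively. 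By the very definition of $\mathfrak{d}(x',t')$ the same cylinder is also contained in $\{u>0\}$, which is the desired inclusion.

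The only delicate point I anticipate is the calibration between the non-degeneracy radius and the inner radius $\varrho r$: applying Theorem \ref{NonDeg} on the full $Q^{-}_r(x_0,t_0)$ can place $(x',t')$ on the parabolic boundary of $Q_r(x_0,t_0)$, after which no positive-radius cylinder around $(x',t')$ fits inside the ambient cylinder. The remedy is precisely the preliminary shrinking to $Q^{-}_{r/2}$, but one must verify the inclusion in the intrinsic parabolic metric governed by $\theta=p(1-q)/(p-1-q)$. This is harmless because $\theta>0$ whenever $0\le q<1$, but it is what introduces the slightly unusual condition $2^{-\theta}+\varrho^\theta\le 1$ in the argument.
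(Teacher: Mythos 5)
Your proposal is correct and follows essentially the same route as the paper: apply the non-degeneracy theorem on $Q^{-}_{r/2}(x_0,t_0)$ to produce a point $(x',t')$ with $u(x',t')\geq \mathfrak{C}^{\ast}_0 (r/2)^{\frac{p}{p-1-q}}$, play this against the sharp upper growth of Theorem \ref{MThm} to bound $\mathfrak{d}(x',t')$ from below by a multiple of $r$, and then fit a cylinder of radius comparable to $r$ around $(x',t')$ inside $Q_r(x_0,t_0)\cap\{u>0\}$. The only cosmetic difference is that the paper phrases the upper-bound step as a contradiction with a nearby free boundary point, while you invoke $\mathfrak{d}$ directly, and your explicit check $2^{-\theta}+\varrho^{\theta}\leq 1$ for the time inclusion is, if anything, slightly more careful than the paper's fixed choice $\iota r/4$.
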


\begin{proof} Let $(x_0,t_0) \in \overline{\{u > 0\}} \cap \overline{Q_{\frac{1}{2}}}$. For $r$ small enough, we have by Theorem \ref{NonDeg} that there exists $(x^{\prime},t^{\prime})\in Q^{-}_{\frac{r}{2}}(x_0,t_0)$ such that:
\begin{equation}\label{nd}
    u(x^{\prime},t^{\prime})\geq \mathfrak{C}^{\ast}_0.\left( \frac{r}{2}\right)^{\frac{p}{p-q-1}}.
\end{equation}
Suppose that for all $0 < \mathfrak{d} < 1$ small, there exists a point $(x,t) \in \partial \{u > 0\} \cap \overline{Q_{\frac{1}{2}}}$ satisfying:
\begin{equation}\label{nd1}
   (x^{\prime},t^{\prime})\in Q_{\mathfrak{d}}(x,t)\subset Q_r(x_0,t_0).
\end{equation}
Now, according to \eqref{nd}, \eqref{nd1} and Theorem \ref{MThm}, it follows:
\begin{equation}
    \mathfrak{C}^{\ast}_0.\left(\frac{r}{2}\right)^{\frac{p}{p-q-1}} \leq u\left(x^{\prime},t^{\prime}\right)\leq \sup_{Q_{\mathfrak{d}}(x,t)} u \leq \mathfrak{C},\mathfrak{d}^{\frac{p}{p-q-1}}.
\end{equation}
This clearly does not hold for $\mathfrak{d} < \iota.\frac{r}{2}$,
where:
$$
      \iota \defeq \left( \frac{\mathfrak{C}^{\ast}_0}{\mathfrak{C}}\right)^{\frac{p-q-1}{p}} < 1.
$$
Hence
$$
     Q_{\frac{\iota}{4}r}(x^{\prime},t^{\prime})\subset Q_r(x_0,t_0) \cap \{u > 0\}.
$$
This ends the proof of the theorem.

\end{proof}

\begin{remark} Notice that Corollary \ref{positive density f} assures that the free boundary cannot have Lebesgue points. Consequently, for any compact set $\mathrm{K} \subset Q_1$, we have:
$$
   \mathscr{L}^{N+1}(\partial \{u>0\} \cap \mathrm{K}) = 0.
$$
\end{remark}

Next, we shall prove, as an easy consequence of the above result, that the free boundary is a porous set. We recall the definition of this notion.

\begin{definition}[{\bf Porous set}]\label{porousset} A set $\mathcal{E} \in \mathbb{R}^{N}$ is said to be porous with porosity constant $0<\varsigma\leq 1$ if there exists $\mathfrak{R} > 0$ such that for each $x_0 \in \mathcal{E}$ and $0 < \mathfrak{r} < \mathfrak{R}$ there is a point $x^{\prime}$ so that $B_{\mathfrak{r}\varsigma}(x^{\prime}) \subset B_{\mathfrak{r}}(x_0) \setminus \mathcal{E}$.
\end{definition}

Observe that a porous set has Hausdorff dimension at most $N-c_0\varsigma^N$, where $c_0 = c_0(N)>0$. In particular, a porous set has Lebesgue measure zero (cf. \cite{Z88}).

\begin{corollary}[{\bf Porosity for $t$-level of free boundary}] Let $u$ be the weak solution to \eqref{DCSA}. For every compact set $\mathrm{K} \subset Q_1$ holds that:
$$
   \mathcal{H}^{N-\delta}(\partial \{u>0\} \cap \mathrm{K} \cap \{t=t_0\})< \infty
$$
for a constant $0< \delta = \delta(N, p, q, \lambda_0,
\|u\|_{L^{\infty}(Q_1)}, \dist(\mathrm{K}, \partial_p
Q_1))\leq 1.$
\end{corollary}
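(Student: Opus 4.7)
The plan is to reduce the claim to porosity of the spatial slice of the free boundary, and then invoke the general Hausdorff dimension bound recalled right after Definition \ref{porousset}: a porous subset of $\R^N$ with porosity constant $\varsigma$ has Hausdorff dimension at most $N-c_0(N)\varsigma^N$. Setting $\mathcal{E}_{t_0}\defeq \partial\{u>0\}\cap\mathrm{K}\cap\{t=t_0\}\subset\R^N$, it suffices to show that $\mathcal{E}_{t_0}$ is porous in $\R^N$ with a constant $\varsigma$ depending only on the parameters displayed in the statement. Granting this, the choice $\delta\defeq \min\{1,\,c_0(N)\varsigma^N\}$ immediately yields $\mathcal{H}^{N-\delta}(\mathcal{E}_{t_0})<\infty$, as required.

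The porosity of $\mathcal{E}_{t_0}$ will be obtained by coupling the positive density result (Corollary \ref{positive density f}) with the monotonicity hypothesis $\frac{\partial u}{\partial t}\geq 0$ built into the class of solutions under consideration. Fix $(x_0,t_0)\in \mathcal{E}_{t_0}$ and choose a threshold $R_0=R_0(\dist(\mathrm{K},\partial_p Q_1))>0$ small enough that whenever $0<r<R_0$ one has $Q_r(x_0,t_0)\Subset Q_{\frac{1}{2}}$; this is where the dependence of $\delta$ on $\dist(\mathrm{K},\partial_p Q_1)$ enters. Since $(x_0,t_0)\in\partial\{u>0\}\subset\overline{\{u>0\}}$, Corollary \ref{positive density f} applies and produces a point $(x',t')\in Q^{-}_r(x_0,t_0)$ together with a full parabolic cylinder $Q_{\varrho r}(x',t')\subset Q_r(x_0,t_0)\cap\{u>0\}$, where $\varrho$ is the universal constant supplied by that corollary.

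The genuinely parabolic step, which I expect to be the main subtlety, is then the monotonicity bridge. The cylinder $Q_{\varrho r}(x',t')$ contains a time slab strictly below $t'$, so for every $x\in B_{\varrho r}(x')$ there is some $s<t'\leq t_0$ with $u(x,s)>0$. The assumption $u_t\geq 0$ then forces $u(x,t_0)\geq u(x,s)>0$, so $B_{\varrho r}(x')\subset\{x\in\R^N:\,u(x,t_0)>0\}$ and in particular $B_{\varrho r}(x')\cap\mathcal{E}_{t_0}=\emptyset$. Since $B_{\varrho r}(x')\subset B_r(x_0)$ by construction, this is exactly the porosity condition for $\mathcal{E}_{t_0}$ with constant $\varsigma\defeq\varrho$ and threshold $R_0$. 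Combined with the reduction of the first paragraph, the corollary follows. The only delicate point is this upward transport of positivity from the time $t'$ handed back by Corollary \ref{positive density f} to the target slice $\{t=t_0\}$, which is precisely why the hypothesis $u_t\geq 0$ (already pervasive in the paper) is indispensable here.
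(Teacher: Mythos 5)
Your argument is correct, and it takes a mildly different route from the paper's. The paper proves porosity of the slice directly: at a free boundary point $(z,t_0)$ it invokes Theorem \ref{NonDeg} to produce a point $x'$ with $u(x',t_0)\geq \mathfrak{C}^{\ast}_0 r^{\frac{p}{p-1-q}}$ (this step already uses $\frac{\partial u}{\partial t}\geq 0$ implicitly, to move the supremum on $\partial_p Q^{-}_r$ up to the top time level), then uses Theorem \ref{MThm} to get $\mathfrak{d}(x',t_0)\geq \delta^{\ast}r$, and finally places by hand a ball $B_{\delta^{\ast}r/2}(y)$ on the segment $[z,x']$ inside $B_r(z)\setminus\partial\{u>0\}$. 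You instead reuse the already established positive-density statement (Corollary \ref{positive density f}), which encapsulates exactly that non-degeneracy-plus-growth interplay, and then add the monotonicity bridge $u(x,t_0)\geq u(x,t')>0$ to transport the positivity cylinder $Q_{\varrho r}(x',t')$ (with $t'\leq t_0$) to the target slice; since containment of cylinders gives $B_{\varrho r}(x')\subset B_r(x_0)$ and $\{u>0\}$ is open, this yields porosity of $\mathcal{E}_{t_0}$ with constant $\varrho$, and the Hausdorff bound follows from the remark after Definition \ref{porousset}, exactly as in the paper. What your route buys is economy (no separate geometric ball-placement and no re-derivation of the slice estimate) at the cost of making the hypothesis $\frac{\partial u}{\partial t}\geq 0$ explicit, which is harmless since it is part of the standing framework (Theorem \ref{MThm0} and the class $\mathfrak{J}_p$) and is tacitly used by the paper as well. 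Two cosmetic caveats, both at the same level of informality as the paper itself: your threshold $R_0$ ensuring $Q_r(x_0,t_0)\subset Q_{\frac12}$ really requires the usual covering/rescaling of $\mathrm{K}\subset Q_1$ (the paper's ``WLOG $\mathrm{K}=\overline{Q_{1/2}}$'' hides the same step), and to claim $\mathcal{H}^{N-\delta}(\mathcal{E}_{t_0})<\infty$ it is cleaner to take $\delta$ strictly below $c_0\varrho^{N}$ (e.g.\ $\delta=\min\{1,c_0\varrho^{N}\}/2$), since at the borderline exponent the dimension bound alone does not guarantee finiteness.
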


\begin{proof}
Without loss of generality we can suppose that $\mathrm{K} = \overline{Q_{\frac{1}{2}}}$. Let $(z, t_0)\in\p \{u>0\}\cap \overline{Q_{\frac{1}{2}}}$ then for $0<r\ll1$, according to Non-degeneracy property, there exists $x^{\prime}\in\p B_r(z)$ such that:
$$
   u(x^{\prime},t_0)\geq \mathfrak{C}^{\ast}_0.r^\frac{p}{p-1-q}.
$$
On the other hand, from Theorem \ref{MThm}:
$$
   u(x^{\prime}, t_0)\leq \mathfrak{C}.\mathfrak{d}(x^{\prime}, t_0)^\frac{p}{p-1-q}.
$$
Consequently:
$$
  \mathfrak{C}^{\ast}_0.r^\frac{p}{p-1-q} \leq u(x^{\prime},t_0)\leq \mathfrak{C}.\mathfrak{d}(x^{\prime}, t_0)^\frac{p}{p-1-q}.
$$
Next, by selecting $\delta^{\ast} =(\frac{\mathfrak{C}^{\ast}_0}{\mathfrak{\mathfrak{C}}})^\frac{p-1-q}{p}$, then $\mathfrak{d}(x^{\prime}, t_0)\geq \delta^{\ast}r$ for a $0<\delta^{\ast}\leq 1$. Therefore:
$$
   B_{\delta^{\ast}r}(x^{\prime})\cap B_r(z)\subset \{u>0\} \cap \Omega_T.
$$
Now, choose $y\in [z, x^{\prime}]$ such that $|y-x^{\prime}|=\frac{\delta^{\ast} r}{2}$. Note that for any $y_0\in B_{\frac{\delta^{\ast} r}{2}}(y)$ we have:
$$
  |y_0-x^{\prime}| \leq |y_0-y|+|y-x^{\prime}| = \delta^{\ast} r.
$$
Moreover, since $|z-x^{\prime}| = |y-z| + |y-x^{\prime}|$ then:
$$
   |y_0 - z| \leq |y_0 - y|+ (|z-x^{\prime}|-|y - x^{\prime}|)\leq\frac{\delta^{\ast} r}{2}+\left(r-\frac{\delta^{\ast} r}{2}\right) = r
$$
so, we conclude that:
$$
  B_{\frac{\delta^{\ast} r}{2}}(y)\subset B_{\delta^{\ast} r}(x^{\prime}) \cap B_r(z)\subset B_r(z)\setminus \partial \{u>0\}.
$$
Therefore, $\partial \{u>0\}\cap\{t=t_0\}\cap \mathrm{K}$ is porous with porosity constant $\delta \defeq \frac{\delta^{\ast}}{2}$.
\end{proof}

In contrast to one of the most known properties of heat equation, namely the infinity speed of propagation,  parabolic $p$-dead-core solutions have the property of \textit{finite speed propagation}. Such a property supports the physical soundness of the equation to diffusive models. Moreover, the occurrence of this phenomenon is consequence of the degeneracy of the equation at the level set $u=0$. Such a property is well-known in the literature and can be obtained by several different methods (cf. D\'{i}az \cite[Theorem 10]{Diaz2} for an alternative approach). The next result is similar to the one in \cite[Corollary 4.4]{CW}. Hence, we will only write the modifications for the reader's convenience.

\begin{corollary}[{\bf Finite speed propagation of  $\{u>0\}$}]\label{porousset} There exists a constant $\mathfrak{c}(N, p, q) \geq 1$ such that, for any solution to (\ref{DCSA}), with non-negative and bounded time derivative, and any $Q^{+}_{r}(x_0, t_0) \subset \Omega_T$, the implication:
$$
   u(\cdot, t_0) = 0 \quad \mbox{in} \quad B_r(x_0) \Rightarrow u(\cdot, t_0+\mathfrak{s}^{\theta}) = 0 \quad \mbox{in} \quad B_{\max\{0, r-\mathfrak{c}\mathfrak{s}\}}(x_0)
$$
holds.
\end{corollary}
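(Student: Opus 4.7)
The plan is to compare $u$ against a traveling-wave supersolution whose dead-core shrinks at a finite intrinsic-parabolic speed, and to invoke Lemma~\ref{comparison} — a direct moving-interface upgrade of the static barrier used in the proof of Theorem~\ref{NonDeg}, in the spirit of \cite[Corollary~4.4]{CW}. After translating we may assume $(x_0,t_0)=(0,0)$, and it suffices to show that $u(x_1,\mathfrak{s}^{\theta})=0$ for an arbitrary $x_1$ with $|x_1|\leq r-\mathfrak{c}\mathfrak{s}$ (the case $\mathfrak{s}\geq r/\mathfrak{c}$ being vacuous). An intrinsic rescaling $u\mapsto u/\kappa_0$, $(x,t)\mapsto(\mathfrak{R}_0 y,\mathfrak{R}_0^{\theta}s)$, exactly as at the end of the proof of Theorem~\ref{MThm0}, reduces us to the normalized setting $\|u\|_{L^{\infty}}\leq 1$.

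I would then build the moving profile
$$
V(x,t)\;\defeq\;C_0\,\bigl(|x|-R(t)\bigr)_{+}^{\frac{p}{p-1-q}},\qquad R(t)\;\defeq\;r-\mathfrak{c}\,t^{1/\theta},
$$
which vanishes on the shrinking ball $\overline{B_{R(t)}}$; in particular $V(x_1,\mathfrak{s}^{\theta})=0$. The constant $C_0=C_0(N,p,q,\mathfrak{m})$ is fixed exactly as in the radial computation behind the barrier $\hat{\Phi}$ of Theorem~\ref{NonDeg}, so that $\Delta_p V\leq \lambda_0\,V_{+}^{q}$ holds in $\{|x|>R(t)\}$. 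Since $R$ is decreasing, $\partial_t V\geq 0$, so the term $-\partial_t V$ only \emph{helps} the supersolution inequality $\Delta_p V-\partial_t V-\lambda_0 V_+^q\leq 0$, and the intrinsic exponent $1/\theta$ in $R(t)$ is precisely the scaling that makes the interface motion commensurate with the $p$-parabolic operator of scaling $\theta=p(1-q)/(p-1-q)$.

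The comparison $V\geq u$ on the parabolic boundary of $B_r\times(0,\mathfrak{s}^{\theta}]\subset Q_r^{+}(0,0)\subset\Omega_T$ would then be checked as follows: on the bottom face, $u\equiv 0\equiv V$ on $\overline{B_r}\times\{0\}$ by hypothesis; on the lateral face, for each $y$ with $|y|=r$, applying Theorem~\ref{MThm0} at the free-boundary point $(y,0)\in\partial\{u>0\}$ (ensured by $u(\cdot,0)\equiv 0$ in $B_r$ together with the monotonicity of $u$ in time) yields
$$
u(y,t)\;\leq\;\mathfrak{C}\,t^{p/[\theta(p-1-q)]}\;=\;\mathfrak{C}\,t^{1/(1-q)},
$$
whereas $V(y,t)=C_0\,\mathfrak{c}^{p/(p-1-q)}\,t^{1/(1-q)}$, so the inequality $V\geq u$ on this face reduces to $C_0\,\mathfrak{c}^{p/(p-1-q)}\geq \mathfrak{C}$, achieved by taking $\mathfrak{c}$ large enough. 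Lemma~\ref{comparison} then gives $u\leq V$ throughout, and $0\leq u(x_1,\mathfrak{s}^{\theta})\leq V(x_1,\mathfrak{s}^{\theta})=0$ concludes the argument.

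The main obstacle will be the consistent calibration of $C_0$ and $\mathfrak{c}$: the supersolution computation in the subcritical regime $p-1>q$ forces $C_0$ to be \emph{bounded above} (by the same algebraic inequality that fixes $\mathfrak{c}$ in the proof of Theorem~\ref{NonDeg}), while the lateral-boundary comparison requires $C_0\,\mathfrak{c}^{p/(p-1-q)}$ to be \emph{large enough} to dominate the universal constant of Theorem~\ref{MThm0}. The preliminary intrinsic $L^{\infty}$-normalization absorbs $\|u\|_{L^{\infty}}$ into the operator, so the remaining freedom resides entirely in $\mathfrak{c}$, which ends up depending only on $(N,p,q)$ and the structural bounds $\mathfrak{m},\mathfrak{M}$ of $\lambda_0$ — yielding the universal $\mathfrak{c}(N,p,q)$ of the statement, in analogy with \cite[Corollary~4.4]{CW}.
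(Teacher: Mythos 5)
Your proposal is correct in spirit but follows a genuinely different route from the paper. The paper proves the corollary by contradiction, iterating the non-degeneracy property (Theorem \ref{NonDeg}): if $u(x_1,t_0+\mathfrak{s}_1^{\theta})>0$ for some $x_1\in B_{r-\mathfrak{c}\mathfrak{s}_1}(x_0)$, non-degeneracy produces a nearby positivity point at an earlier time, and the assumed boundedness (not just non-negativity) of $\partial_t u$ lets one repeat this with geometrically decreasing time increments $\mathfrak{s}_k\leq(1-\tau)^{k-1}\mathfrak{s}_1$, so that the positivity points converge to a point of $\overline{\{u>0\}}$ at time $t_0$ within distance $\mathfrak{s}_1/\tau$ of $x_1$, contradicting $u(\cdot,t_0)\equiv0$ in $B_r(x_0)$ once $\mathfrak{c}\geq 4/\tau$. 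You instead build a self-similar moving-interface supersolution $V(x,t)=C_0(|x|-r+\mathfrak{c}t^{1/\theta})_+^{p/(p-1-q)}$ and compare: the radial computation does give $\Delta_p V\leq \mathfrak{m}V^q$ for $C_0$ small, $\partial_t V\geq0$ helps, the exponents on the lateral face match ($V(y,t)\sim t^{1/(1-q)}$ versus the upper growth bound $u(y,t)\lesssim t^{1/(1-q)}$ from Theorem \ref{MThm0}), and Lemma \ref{comparison} (applied with $f(s)=\mathfrak{m}s_+^q$) closes the argument. This buys a cleaner mechanism (no iteration, and it uses only $\partial_t u\geq0$ through Theorem \ref{MThm0}, not the boundedness of $\partial_t u$ that the paper's proof invokes), at the cost of relying on the full strength of the interior growth estimate rather than only non-degeneracy.

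Two points need patching. First, on the lateral face you assert $(y,0)\in\partial\{u>0\}$ for every $|y|=r$; this need not hold, since $(y,0)$ may lie in the interior of the zero set, and Theorem \ref{MThm0} is stated only at free boundary points and only for $(x,t)$ sufficiently close to them. The fix is routine: by time-monotonicity either $u(y,t)=0$ (and the comparison is trivial there) or the first positivity time $t^*\in[0,t)$ gives a free boundary point $(y,t^*)$ with $u(y,t)\leq \mathfrak{C}\|u\|_{L^{\infty}}(t-t^*)^{1/(1-q)}\leq\mathfrak{C}\|u\|_{L^{\infty}}t^{1/(1-q)}$; and the ``sufficiently close'' restriction must be checked against the time range $(0,\mathfrak{s}^{\theta}]$. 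Second, your calibration of constants has a circularity you should acknowledge: after the intrinsic normalization $u\mapsto u/\kappa_0$, $(x,t)\mapsto(\mathfrak{R}_0y,\mathfrak{R}_0^{\theta}s)$, the Thiele modulus becomes $\hat\lambda_0=\mathfrak{R}_0^{p}\kappa_0^{-(p-1-q)}\lambda_0$, whose lower bound is no longer $\mathfrak{m}$; since the admissible $C_0$ is controlled by that lower bound, the resulting $\mathfrak{c}$ inherits a dependence on $\|u\|_{L^{\infty}}$ and $\dist(\mathrm{K},\partial_p\Omega_T)$ (equivalently, one can skip the normalization and read this dependence off directly from Theorem \ref{MThm0}). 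This does not break the finite-speed conclusion, but it means your argument, as written, does not deliver a constant depending only on $(N,p,q)$ as announced -- a bookkeeping issue the paper's own proof also glosses over, since its $\tau$ comes from the bound on $\partial_t u$ and its $\mathfrak{C}^{\ast}_0$ depends on $\mathfrak{m}$.
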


\begin{proof}
Let us suppose for sake of contradiction that for $0< \mathfrak{s}_1< \frac{r}{\mathfrak{c}}$ there exists a point $x_1 \in B_{ r-\mathfrak{c}\mathfrak{s}_1}(x_0)$ such that $u(x_1, t_0 + \mathfrak{s}_1^{\theta})>0$. The Non-degeneracy property (Theorem \ref{NonDeg}) implies that:
$$
  u(x_2, \varsigma) \geq \mathfrak{C}^{*}_0\mathfrak{s}_1^{\frac{p}{p-1-q}}
$$
for some $(x_2, \varsigma) \in \overline{Q^{-}_{\mathfrak{s}_1}(x_1, t_0+\mathfrak{s}_1^{\theta})}$. Moreover, since $\frac{\partial u}{\partial t}$ is non-negative and bounded we deduce that there exists $0<\tau(n, p, q)<1$ and $(x_2, t_0+ \mathfrak{s}_2^{\theta})$ satisfying:
$$
u(x_2, t_0+ \mathfrak{s}_2^{\theta}) >0, \quad \mbox{with} \quad 0\leq \mathfrak{s}_2 \leq (1-\tau)\mathfrak{s}_1 \quad \mbox{and} \quad |x_2-x_1| \leq \mathfrak{s}_1.
$$
By iterating the previous reasoning we can obtain a point $(x_k, t_0+\mathfrak{s}_k^{\theta})$ so that:
\begin{eqnarray*}
u(x_k, t_0+ \mathfrak{s}_k^{\theta}) >0, \quad \mbox{with} \quad 0\leq \mathfrak{s}_k \leq (1-\tau)^{k-1}\mathfrak{s}_1 \quad \mbox{and} \quad |x_k-x_1| \leq \frac{\mathfrak{s}_1[1-(1-\tau)^{k-1}]}{\tau}.
\end{eqnarray*}
Finally, up to a subsequence $x_k \to x_{\infty}$ as $k \to \infty$, thus we obtain a point $(x_{\infty}, t_0) \in \overline{\{u>0\}}$ fulfilling $|x_{\infty}-x_1|<\frac{\mathfrak{s}_1}{\tau}$. However, this contradicts our assumptions provided $\mathfrak{c} \geq \frac{4}{\tau}$. This contradiction proves the corollary.
\end{proof}


\section{Global analysis results}\label{FurtAppCons}

\textbf{Blow-up analysis.} Throughout this Section we shall study the blow-up analysis over free boundary points (interior touching points). Thus, let $u$ be a solution to  \eqref{DCSA} and $(x_0, t_0) \in \partial \{u > 0\} \cap Q_{\frac{1}{2}}$. Now, consider, for each $\varepsilon>0$, the blow-up family $u_{\varepsilon}: Q_{\frac{1}{2}} \to \R$ given by:
$$
     u_{\varepsilon} (x, t) \defeq \frac{u\left(x_0+ \varepsilon x, t_0+ \varepsilon^{\theta} t\right)}{\varepsilon^{\frac{p}{p-q-1}}}.
$$
We must stress that this sequence is indeed an $\varepsilon$-zoom-in of $u$ re-scaled in a suitable way. Let us analyse the ``limiting profiles''. Note that $u_{\varepsilon}$ fulfils in the weak sense:
$$
    \Delta_p\, u_{\varepsilon} - \frac{\partial u_{\varepsilon}}{\partial t} =  \lambda_0(x_0+ \varepsilon x, t_0+ \varepsilon^{\theta} t).(u_{\varepsilon})_{+}^q \quad \mbox{in} \quad Q_{\frac{1}{2\varepsilon}}.
$$
From Theorem \ref{MThm} we have that:
$$
    u_{\varepsilon}(x, t) \leq C(N, p, q, \lambda_0) \quad \forall \,\,(x, t) \in Q_{\frac{1}{2\varepsilon}}.
$$
Particularly, $u_{\varepsilon}$ is locally bounded in $Q_{\frac{1}{2\varepsilon}}$. From universal H\"{o}lder regularity, see for instance \cite{DB93}, \cite{DiBUV} and \cite{Urbano}, up to a subsequence $u_{\varepsilon} \to u_0$ locally uniformly to an entire function.

From now on,  $u_0$ will  denote a limiting function coming from the previous reasoning. For this reason, we will label it as \textit{Blow-up solution} at $(x_0, t_0) \in \partial \{u>0\}$. The next Theorem \ref{BUprop} establishes a quantitative control profile at infinity for a class of entire solutions to the dead-core problem, namely blow-up solutions (compare with \cite[Section 3.1]{Tei4}).

\begin{theorem}[{\bf Behaviour of Blow-up solutions}]\label{BUprop} Let $u_0$ be a Blow-up solution at $(x_0, t_0) \in \partial \{u>0\}$. Then $u_0(0, 0) = 0$ and
$$
     \Delta_p \, u_{0} - \frac{\partial u_{0}}{\partial t} =  \lambda_0(0, 0).(u_{0})_{+}^q(x, t) \quad \text{in} \quad \R^N\times \R
$$
in the weak sense. Moreover, there exist universal constants $ c_0, C_0 >0$ such that:
\begin{equation}\label{BUest}
    c_0 \leq  \displaystyle \liminf_{|x|+|t|^{\frac{1}{\theta}} \to \infty} \frac{u_0(x, t)}{\left(|x|+|t|^{\frac{1}{\theta}}\right)^{\frac{p}{p-1-q}}} \leq \displaystyle \limsup_{|x|+|t|^{\frac{1}{\theta}} \to \infty} \frac{u_0(x, t)}{\left(|x|+|t|^{\frac{1}{\theta}}\right)^{\frac{p}{p-1-q}}}\leq C_0.
\end{equation}
\end{theorem}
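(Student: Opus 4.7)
The proof splits naturally into three pieces: (i) the vanishing $u_{0}(0,0)=0$ and the limit PDE, (ii) the upper growth bound at infinity, and (iii) the lower growth bound at infinity. All three are obtained by transferring the finite--scale information already established for $u$ (Theorems \ref{NonDeg} and \ref{MThm0}) to the blow--up limit via the intrinsic scaling $u_\varepsilon(x,t)=\varepsilon^{-p/(p-q-1)}u(x_0+\varepsilon x,\,t_0+\varepsilon^\theta t)$, which is precisely the invariant scaling of the problem.

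For (i): since $(x_0,t_0)\in\partial\{u>0\}$ and $u$ is continuous, $u(x_0,t_0)=0$, hence $u_\varepsilon(0,0)=0$ for every $\varepsilon>0$, and thus $u_0(0,0)=0$ by local uniform convergence. To pass to the limit in the PDE, I would write the weak (Steklov--averaged) formulation for $u_\varepsilon$ on each compact set $K\Subset\R^{N+1}$ (which is contained in $Q_{1/(2\varepsilon)}$ for $\varepsilon$ small), observe that $\lambda_0(x_0+\varepsilon x,t_0+\varepsilon^\theta t)\to \lambda_0(x_0,t_0)=:\lambda_0(0,0)$ a.e. and is uniformly bounded between $\mathfrak{m}$ and $\mathfrak{M}$, and that $(u_\varepsilon)_+^q\to (u_0)_+^q$ uniformly. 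The only nontrivial step is the convergence of the flux $|\nabla u_\varepsilon|^{p-2}\nabla u_\varepsilon$; this follows from the locally uniform interior $C^{1,\beta}_{x}\cap C^{0,\beta/\theta}_{t}$ estimates for the parabolic $p$--Laplacian (\cite{DB93,DiBUV,Urbano}), which upgrade the uniform convergence $u_\varepsilon\to u_0$ to local uniform convergence of gradients along a subsequence; then standard monotonicity arguments close the passage to the limit in the weak form.

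For the upper bound in (ii): apply Theorem \ref{MThm0} to $u$ at the free boundary point $(x_0,t_0)$, which gives
$$
u(x,t)\leq \mathfrak{C}\|u\|_{L^\infty(\Omega_T)}\bigl(|x-x_0|+|t-t_0|^{1/\theta}\bigr)^{p/(p-q-1)}
$$
on a neighbourhood of $(x_0,t_0)$. Rescaling, $u_\varepsilon(x,t)\leq \mathfrak{C}\|u\|_\infty(|x|+|t|^{1/\theta})^{p/(p-q-1)}$ throughout $Q_{1/(2\varepsilon)}$; passing to the limit yields the global bound $u_0(x,t)\leq C_0(|x|+|t|^{1/\theta})^{p/(p-q-1)}$ on $\R^N\times\R$, which gives $\limsup \leq C_0$.

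For the lower bound in (iii): I would apply the non--degeneracy Theorem \ref{NonDeg} directly to the entire solution $u_0$. Since $u_0(0,0)=0$ and, by non--degeneracy of $u$ near $(x_0,t_0)$ together with the uniform convergence, $(0,0)\in\overline{\{u_0>0\}}$, Theorem \ref{NonDeg} yields
$$
\sup_{\partial_p Q^{-}_{R}(0,0)} u_0 \;\geq\; \mathfrak{C}^{*}_0\, R^{p/(p-q-1)} \qquad \text{for every } R>0,
$$
which is the desired asymptotic lower bound (with $c_0=\mathfrak{C}^{*}_0$). Note that the constants $\mathfrak{C}^{*}_0$ and $\mathfrak{C}$ are universal and independent of the scale $\varepsilon$, which is precisely why this argument works globally.

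The main obstacle I expect is the justification of (i), specifically the transfer of the weak formulation to the limit through the degenerate nonlinearity $|\nabla u_\varepsilon|^{p-2}\nabla u_\varepsilon$; this requires invoking the fine interior regularity theory for the parabolic $p$--Laplacian to secure a.e.\ (or better) convergence of the gradients, and then a monotonicity/Minty--type argument. The growth bounds in (ii)--(iii) are then essentially immediate corollaries of the invariance of the problem under intrinsic scaling combined with the two main estimates already proved in the paper.
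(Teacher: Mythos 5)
Your proposal is correct and follows essentially the same route as the paper: pass the scaling-invariant weak formulation to the limit via compactness/stability for the parabolic $p$-Laplacian (as in the proof of Lemma \ref{LemmaIter}), then obtain the upper control at infinity from the local growth estimate (Theorem \ref{MThm}/\ref{MThm0}) and the lower control from non-degeneracy (Theorem \ref{NonDeg}), both of which are scale-invariant and survive the locally uniform limit. Your extra details on gradient convergence and on checking $(0,0)\in\overline{\{u_0>0\}}$ before invoking non-degeneracy for $u_0$ are sound refinements of the same argument.
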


\begin{proof} Since each $u_{\varepsilon}$ fulfils $u_{\varepsilon}(0, 0) = 0$ and:
$$
    \Delta_p u_{\varepsilon} - \frac{\partial u_{\varepsilon}}{\partial t} =  \lambda_0.(u_{\varepsilon})_{+}^{q}
$$
in the weak sense, we can  finish the proof by using stability results (compactness) as in the proof of Lemma \ref{LemmaIter} (see also \cite{DB93}, \cite{DiBUV} and \cite{Urbano}). Finally, the lower and upper control at infinity come from Theorems \ref{MThm} and Theorem \ref{NonDeg}.
\end{proof}

\begin{remark} Note that Blow-up solutions are non-trivial. Moreover, such solutions fulfils:
$$
   c_0.r^\frac{p}{p-q-1} \leq \mathcal{S}_r[u_0]\leq C_0.r^{\frac{p}{p-q-1}},
$$
for values of $r$ large enough.
\end{remark}

\begin{remark}In view of Theorem \ref{BUprop} the non-trivial space-independent blow up solution $u=u(t)$ to:
$$
  \Delta_p u -\frac{\partial u}{\partial t}=\lambda_0u_+^{q}
$$
is given by:
$$
   u(t)=\left[(1-q)\lambda_0(t_0-t)\right]_+^{\frac{1}{1-q}}.
$$
On the other hand, non-trivial time-independent blow up solutions $u=u(x)$ are of the following form:
$$
  u(x) =  \left\lbrace C_{p,q}. (x_i)_{+}^{\frac{p}{p-1-q}}, C_{p,q}. (x_i)_{-}^{\frac{p}{p-1-q}}, C_{p,q}. (|x-x_0|-\mathfrak{R}_0)_{+}^{\frac{p}{p-1-q}}   \right\rbrace,
$$
for any $i=1, \cdots, N$,  where:
$$
   C_{p,q} \defeq \left(\lambda_0. \frac{(p-q-1)^{p}}{p^{p-1}(pq+N(p-1-q))}\right)^{\frac{1}{p-q-1}}.(\pm x)_{+}^{\frac{p}{p-q-1}}.
$$
Notice that the first ones blow-up type solutions are half-space solutions and the last one is a radial solutions with dead core being precisely $B_{\mathfrak{R}_0}(x_0)$.
\end{remark}

In the next paragraph, we shall be devoted to establish a general Liouville-type theorem for any entire solution to \eqref{DCSA}.

\textbf{A Liouville-type Theorem} Liouville-type results have represented an important chapter in the modern mathematical history mainly due to their quantitative and classificatory character. Such results have presented several implications in Analysis, PDEs, Geometry and free boundary problems. In effect, the knowledge of the asymptotic behaviour/global profile for certain entire solutions at infinity is a decisive information in many quantitative researches (cf. \cite{dBGV} for an interesting work on this direction).

In this part, we are concerned  in proving a Liouville-type result for global dead-core solutions. In other words, a global weak solution must grow faster than $\max\left\{|x|, \sqrt[\theta]{|t|}\right\}^{\frac{p}{p-q-1}}$ as $\max\left\{|x|, \sqrt[\theta]{|t|}\right\}\to \infty$, unless it is identically zero. The next Theorem is based on \cite[Theorem 8]{Tei4}, thus we will include some details for completeness.

\begin{theorem}\label{Liouville}
Let $u$ be an entire weak solution to:
$$
    \Delta_p \,u(x, t) - \frac{\partial u}{\partial t}(x, t) = \lambda_0(x, t).u_{+}^q(x, t)
$$
with $u(0, 0)=0\,$ and $\lambda_0$ as before. If $u(x, t)=\mathrm{o}\left(\max\left\{|x|, \sqrt[\theta]{|t|}\right\}^{\frac{p}{p-q-1}}\right)$ as
$\max\left\{|x|, \sqrt[\theta]{|t|}\right\}\to \infty$, then $u\equiv 0$.
\end{theorem}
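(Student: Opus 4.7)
My plan is to argue by contradiction using the non-degeneracy estimate of Theorem \ref{NonDeg}, which on the whole space places a quantitative lower bound on $u$ around any positivity point at \emph{every} scale. Suppose that $u \not\equiv 0$. Since $u(0,0)=0$ but $u$ is not identically zero, there exists some point $(y_0, s_0) \in \{u>0\}\subset \overline{\{u>0\}}$. Because the equation is posed on the entire space-time, for any $r>0$ we have $Q_r(y_0,s_0) \Subset \R^N \times \R$, so Theorem \ref{NonDeg} applies for every $r$ and yields a sequence of points $(x_r, t_r) \in \partial_p Q_r^{-}(y_0, s_0)$ satisfying
\begin{equation*}
   u(x_r, t_r) \;\geq\; \mathfrak{C}_0^{\ast}\, r^{\frac{p}{p-q-1}}.
\end{equation*}

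The key observation is that, on the parabolic boundary, either $t_r = s_0 - r^{\theta}$ (so $|t_r|^{1/\theta} \geq (r^{\theta}-|s_0|)^{1/\theta}$) or $|x_r - y_0| = r$ (so $|x_r| \geq r - |y_0|$); in either case $\max\{|x_r|,|t_r|^{1/\theta}\} \to \infty$ as $r\to\infty$. On the other hand, the trivial upper bounds $|x_r| \leq r + |y_0|$ and $|t_r|^{1/\theta} \leq C(r + |s_0|^{1/\theta})$ give
\begin{equation*}
   \max\bigl\{|x_r|,\, |t_r|^{1/\theta}\bigr\} \;\leq\; \mathfrak{K}\,(r + 1),
\end{equation*}
for a constant $\mathfrak{K}$ depending only on $(y_0,s_0)$ and $\theta$.

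Combining the two displays, for all sufficiently large $r$ we obtain
\begin{equation*}
   \frac{u(x_r,t_r)}{\max\{|x_r|,|t_r|^{1/\theta}\}^{\frac{p}{p-q-1}}} \;\geq\; \frac{\mathfrak{C}_0^{\ast}\, r^{\frac{p}{p-q-1}}}{\mathfrak{K}^{\frac{p}{p-q-1}}\,(r+1)^{\frac{p}{p-q-1}}} \;\longrightarrow\; \frac{\mathfrak{C}_0^{\ast}}{\mathfrak{K}^{\frac{p}{p-q-1}}} \;>\; 0
\end{equation*}
as $r\to\infty$. Since the points $(x_r,t_r)$ satisfy $\max\{|x_r|,|t_r|^{1/\theta}\}\to\infty$, this contradicts the hypothesis $u(x,t) = \mathrm{o}\!\left(\max\{|x|,|t|^{1/\theta}\}^{\frac{p}{p-q-1}}\right)$. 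Hence $u\equiv 0$.

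The only delicate point is the bookkeeping of the parabolic geometry: one must verify that every choice of $(x_r,t_r)\in\partial_p Q_r^{-}(y_0,s_0)$ automatically forces $\max\{|x_r|,|t_r|^{1/\theta}\}\to\infty$, since otherwise the little-$\mathrm{o}$ assumption could not be activated. I expect this to be the only place where some care (as opposed to routine computation) is needed; everything else is a direct application of non-degeneracy.
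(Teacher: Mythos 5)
Your argument is correct, but it is genuinely different from the paper's. The paper proves Theorem \ref{Liouville} "from above": it rescales $u_r(x,t)=u(rx,r^{\theta}t)/r^{\frac{p}{p-q-1}}$, uses the growth hypothesis to show $\|u_r\|_{L^{\infty}(Q_1)}=\mathrm{o}(1)$, and then invokes the improved regularity estimate of Theorem \ref{MThm0} at the free boundary point $(0,0)$ (which is where the hypothesis $u(0,0)=0$ enters) to force the ratio $u(x_0,t_0)/\max\{|x_0|,|t_0|^{1/\theta}\}^{\frac{p}{p-q-1}}$ below any $\xi>0$. You instead argue "from below": you only use the non-degeneracy estimate, Theorem \ref{NonDeg}, at an arbitrary positivity point $(y_0,s_0)$ and at every scale $r$, producing points on $\partial_p Q^{-}_r(y_0,s_0)$ that escape to infinity while keeping the ratio bounded away from zero, which contradicts the little-$\mathrm{o}$ hypothesis directly. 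Your route is more elementary (it rests only on the barrier-plus-comparison proof of Theorem \ref{NonDeg}, not on the compactness/flatness machinery behind Theorem \ref{MThm0}), it never uses $u(0,0)=0$, and it sidesteps the time-monotonicity hypothesis $\partial_t u\ge 0$ that Theorem \ref{MThm0} formally carries but Theorem \ref{Liouville} does not assume; in fact it yields the slightly stronger statement that any nontrivial entire solution must attain the critical growth $r^{\frac{p}{p-q-1}}$ along a sequence going to infinity. What the paper's approach buys, in exchange, is a quantitative decay statement for the rescalings ($\sup_{Q_{1/2}}u_r\to 0$), which is the template reused for the blow-up analysis of Theorem \ref{BUprop}. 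Two cosmetic points you should tidy up: Theorem \ref{NonDeg} gives a supremum bound over $\partial_p Q^{-}_r$, so either pass to the closure (using continuity of $u$) or pick $(x_r,t_r)$ with $u(x_r,t_r)\ge \tfrac12\mathfrak{C}^{\ast}_0 r^{\frac{p}{p-q-1}}$, which suffices; and note explicitly that for an entire solution every cylinder $Q_r(y_0,s_0)$ is compactly contained in a bounded subcylinder on which $u$ is a weak solution, so the (purely local) non-degeneracy theorem indeed applies at all scales.
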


\begin{proof}
For each positive number $r \gg 1$, let us define the auxiliary scaled function $u_r: Q_1 \to \R_{+}$ given by
$$
     u_r(x, t) \defeq  \frac{u(r\, x,r^{\theta}t)}{r^{\frac{p}{p-q-1}}}.
$$
Thus, it is easy to check that
$$
     \Delta_p \, u_r- \frac{\partial u_r}{\partial t}  = \lambda_0(r\, x,r^{\theta}t) (u_r)_{+}^q \quad \mbox{in} \quad Q_1
$$
in the weak sense and $u_r(0,0)=0$. Now, we affirm that
$$
     \|u_r\|_{L^\infty(Q_1)} = \mbox{o}\,(1) \quad \text{as} \quad r \to \infty.
$$
In effect, for each $r \in \mathbb{R}_{+}$, let $(x_r, t_r) \in \mathbb{R}^N \times \R$ be such that $u_r$ achieves its maximum, i.e.,
$$
    u_r(x_r,t_r)= \sup\limits_{Q_1}u_r(x, t).
$$
Now, we must analyse two possibilities:
\begin{enumerate}
  \item  If $\lim\limits_{r \to \infty} \max\left\{|rx_r|, \sqrt[\theta]{|r^\theta
t_r|}\right\} = \infty,$ we obtain by using the assumption
\begin{align*}
 u_r(x_r,t_r) &= \frac{u(rx_r,r^{\theta}t_r)}{\max\left\{|rx_r|, \sqrt[\theta]{|r^\theta
t_r|}\right\}^{\frac{p}{p-q-1}}}
\max\left\{|x_r|, \sqrt[\theta]{|t_r|}\right\}^{\frac{p}{p-q-1}}\\
&\leq C(N, p, q).o(1)\to 0 \quad \mbox{as}\quad r \to \infty.
\end{align*}
  \item On the other hand, if $\lim\limits_{r \to \infty} \max\left\{|rx_r|, \sqrt[\theta]{|r^\theta
t_r|}\right\} < \infty$ we derive the same conclusion as before for $u_r(x_r, t_r)$, since $u$ is a continuous function (cf. \cite{DB93}, \cite{dBGV},  and \cite{Urbano} for regularity of in-homogeneous degenerate evolution equations).
\end{enumerate}
Therefore, by applying Theorem \ref{MThm0} we obtain
\begin{equation}\label{odeum}
\begin{array}{rcl}
   u_r(x, t) & \leq & \mbox{o}\,(1).\mathfrak{C}(N, p, q, \lambda_0) \left(|x|+|t|^{\frac{1}{\theta}}\right)^{\frac{p}{p-q-1}} \\
   & \leq & \mbox{o}\,(1).\mathfrak{C}(N, p, q, \lambda_0)\max\left\{|x|, \sqrt[\theta]{|t|}\right\}^{\frac{p}{p-q-1}} \quad
\mbox{in} \quad Q_{\frac{1}{2}} \quad \text{for} \quad r\gg 1,
\end{array}
\end{equation}
where we have used the equivalence of norms in $\R^{N+1}$. Now, suppose reach a contradiction that there exists a $(x_0, t_0) \in (\mathbb{R}^N \times \R_{+}) \setminus \{(0, 0)\}$ such that $u(x_0, t_0)>0$. Observe that \eqref{odeum} means that given $\xi>0$ there exists an $r_0 = r_0(\xi, p, q)>0$ such that
$$
            \sup\limits_{Q_{\frac{1}{2}}} \dfrac{u_r(x,t)}{\max\left\{|x|, \sqrt[\theta]{|t|}\right\}^{\frac{p}{p-q-1}}} \le \xi,
$$
provided $r> r_0$. For such a $\xi$ fixed, we now estimate, for $r \gg 2\max\left\{|x_0|, \sqrt[\theta]{|t_0|}, r_0\right\}$:
    \begin{eqnarray*}
            \dfrac{u(x_0,t_0)}{\max\left\{|x_0|, \sqrt[\theta]{|t_0|}\right\}^{\frac{p}{p-q-1}}} \le \sup\limits_{Q_{\frac{r}{2}}} \dfrac{u(x,t)}{\max\left\{|x|, \sqrt[\theta]{|t|}\right\}^{\frac{p}{p-q-1}}}
            \le  \sup\limits_{Q_{\frac{1}{2}}} \dfrac{u_r(x, t)}{\max\left\{|x|, \sqrt[\theta]{|t|}\right\}^{\frac{p}{p-q-1}}}
         \le  \xi.
    \end{eqnarray*}
The proof finishes  by selecting $0<\xi < \dfrac{u(x_0,t_0)}{\max\left\{|x_0|, \sqrt[\theta]{|t_0|}\right\}^{\frac{p}{p-q-1}}}$, which is possible due to our  hypothesis. Such a contradiction completes the proof of Theorem.
\end{proof}

\begin{remark} Consider:
$$
  u(x, t)  = \left\{
\begin{array}{l}
  \left[\lambda_0(1-q)(-t)\right]_+^{\frac{1}{1-q}}\\
  C_{p,q}. (x_i)_{\pm}^{\frac{p}{p-1-q}} \\
   C_{p,q}. (|x-x_0|-\mathfrak{R}_0)_{+}^{\frac{p}{p-1-q}}
\end{array}
\right.
$$
for $i=1, \cdots, N$, where
$$
   C_{p,q} \defeq \left(\lambda_0. \frac{(p-q-1)^{p}}{p^{p-1}(pq+N(p-1-q))}\right)^{\frac{1}{p-q-1}}.
$$
Then, $u$ is a weak solutions to
$$
    \Delta_p \,u(x, t) - \frac{\partial u}{\partial t}(x, t) = \lambda_0.u_{+}^q(x, t).
$$
Then, the assumption of Theorem \ref{Liouville} is sharp in the sense that there are half-space/ radial solutions such that
$$
  \frac{u(x, t)}{\left(\max\left\{|x|, \sqrt[\theta]{|t|}\right\}^{\frac{p}{p-q-1}}\right)}> 0 \quad \text{as}
 \quad \max\left\{|x|, \sqrt[\theta]{|t|}\right\}\to \infty.
$$
For an analysis about radial solutions of fully non-linear elliptic and quasi-linear dead-core problems we recommend \cite[Section 6]{LRS} and \cite[Section 5.2]{SS}.
\end{remark}

\section*{Acknowledgements}

This work was partially supported by ANPCyT under grant PICT $2012-0153$, by Consejo Nacional de Investigaciones Cient\'{i}ficas y T\'{e}cnicas (CONICET-Argentina PIP $5478/1438$), by the CNPq (Brazilian Government Program \textit{Ci\^{e}ncia sem Fronteiras}) and by Universidad de Buenos Aires under grant UBACYT $20020100100400$. The authors would like to thank Prof. Jes\'{u}s Ildefonso D\'{i}az for pointing out many valuable references. The authors would also like to thank Prof. Noem\'{i} Wolanski for several insightful comments and discussions. The authors are also grateful to the anonymous referee(s) for pointing out a number of improvements that benefited a lot the final outcome of the article. JV da Silva thanks IMASL (CONICET) from Universidad Nacional de San Luis for its warm hospitality and for fostering a pleasant scientific atmosphere during his visit where part of this paper was written. JV da Silva also thanks (CONICET - Postdoctoral Fellowship) and \textit{Research Group of PDEs}/Math. Dept./FCEyN from Universidad de Buenos Aires by the excellent working environment during his Postdoctoral program. P. Ochoa and A. Silva were partially supported by CONICET-Argentina.


\begin{thebibliography}{0}

\scriptsize{

    \bibitem{AlvDiaz1} \'{A}LVAREZ, L. and D\'{I}AZ, J. I. ``On the initial growth of interfaces in reaction-diffusion equations with strong absorption''. \textit{Proceedings of the Royal Society of Edinburgh} 123 A, 803-817, 1993.\label{AlvDiaz1}

    \bibitem{ALS} ANDERSON, J., LINDGREN, E. and SHAHGHOLIAN, H. ``Optimal regularity for the obstacle problem for the $p-$Laplacian''. \textit{J. Differential Equations}, 259 (2015), no. 6, 2167--2179.\label{ALS}

    \bibitem{AntDiazShma1} ANTONTSEV, S. N.; D\'{I}AZ, J. I. and SHMAREV, S. ``The support shrinking properties for solutions of quasilinear parabolic equations with strong absorption terms''.\textit{ Annales de la Facult\'{e} des Sciences de Toulouse}  IV no. 1, 5-30, 1995.\label{AntDiazShma1}

    \bibitem{AntDiazShma2} ANTONTSEV, S. N. D\'{I}AZ, J. I. and SHMAREV, S. ``Energy methods for free boundary problems. Applications to nonlinear PDEs and Fluid Mechanics''. \textit{Progress in Nonlinear Differential Equations and their Applications}, 48. Birkhäuser Boston, Inc., Boston, MA, 2002. xii+329 pp. ISBN: 0-8176-4123-8.\label{AntDiazShma2}

    \bibitem{ATU1} ARA\'{U}JO, D. J., TEIXEIRA, E. V. and URBANO, J. M. ``A proof of the $C^{p^{\prime}}$ regularity conjecture in the plane''. \textit{Adv. Math}. 316 (2017), 541--553.\label{ATU1}

    \bibitem{ATU2} ARA\'{U}JO, D. J., TEIXEIRA, E. V. and URBANO, J. M. ``Towards the $C^{p^{\prime}}$ regularity conjecture''. \textit{Int. Math. Res. Not. IMRN} 2018, no. 20, 6481-6495.\label{ATU}

    \bibitem{BS} BANDLE, C. and STAKGOLD, I. ``The formation of the dead core in parabolic reaction-diffusion problems". \textit{Transactions of the American Mathematical Society} 286 no.  1,  275-293, 1984.\label{BS}

    \bibitem{CafSal} CAFFARELLI, L.A. and SALSA, S. ``A Geometric Approach to Free Boundary Problems''. \textit{Graduate Studies in Mathematics}, 68. American Mathematical Society, Providence, RI, 2005. x+270 pp. ISBN: 0-8218-3784-2.\label{CafSal}

    \bibitem{CW} CHOE, H.J. and WEISS, G. S.  ``A semilinear parabolic equation with free boundary''.\textit{ Indiana Univ. Math. J.,} 52 no. 1, 19-50, 2003.\label{Choe}

    \bibitem{SP} DA SILVA, J. V. and DOS PRAZERES, D. S. ``Schauder Type Estimates for ``Flat'' Viscosity Solutions to Non-convex Fully Nonlinear Parabolic Equations and Applications''. \textit{Potential Anal}. 50 (2019), no. 2, 149-170.\label{SP}

    \bibitem{LRS} DA SILVA, J. V.; LEIT\~{A}O, R. A. and RICARTE, G. C. ``Geometric regularity estimates for fully nonlinear elliptic equations with free boundaries''. To appear in \textit{Mathematische Nachrichten}, 2020.\label{LRS}

    \bibitem{daSO} DA SILVA, J. V. and OCHOA, P. ``Fully nonlinear parabolic dead core problems''. \textit{Pacific J. Math}. 300 (2019), no. 1, 179-213.\label{daSO}

    \bibitem{SS} DA SILVA, J. V. and SALORT, A.. ``Sharp regularity estimates for quasi-linear elliptic dead core problems and applications''. \textit{Calc. Var. Partial Differential Equations} 57 (2018), no. 3, 57: 83.\label{SS}

    \bibitem{ST} DA SILVA, J. V. and TEIXEIRA, E.V. ``Sharp regularity estimates for second order fully nonlinear parabolic equations''. \textit{Math. Ann}. 369 (2017), 1623-1648.\label{ST}

    \bibitem{Diaz2} D\'{I}AZ, J. I. ``Qualitative Study of Nonlinear Parabolic Equations: an Introduction''.\textit{ Extracta Mathematicae} 16 no. 2, 303-341, 2001. \label{Diaz2}

    \bibitem{DiazHer1} D\'{I}AZ, J.I. and  HERRERO, M. ``Propri\'{e}t\'{e}s de support compact pour certaines \'{e}quations elliptiques et paraboliques non lin\'{e}aires''. \textit{ C.R. Acad. Sc. Paris}  286 S\'{e}rie I, 815-817, 1978.\label{DiazHer1}

    \bibitem{DiazMing} D\'{I}AZ, J.I. and MINGAZZINI, T. ``Free boundaries touching the boundary of the domain for some reaction-diffusion problems''. \textit{ Nonlinear Analysis Series A: Theory, Mehods and Applications} 119, 275-294, 2015.\label{DiazMing}

     \bibitem{DB93} DIBENEDETTO, E. `` Degenerate parabolic equations''. \textit{Universitext}. Springer-Verlag, New York, 1993. xvi+387 pp. ISBN: 0-387-94020-0. \label{DB93}

    \bibitem{dBGV} DIBENEDETTO, E.; GIANAZZA, U. and VESPRI, V. ``Liouville-type theorems for certain degenerate and singular parabolic equations''. \textit{C. R. Math. Acad. Sci. Paris }348, 873-877, 2010. \label{dBGV}

    \bibitem{DiBUV} DIBENEDETTO, E.; URBANO, J. M. and VESPRI, V. ``Current issues on singular and degenerate evolution equation''. \textit{Evolutionary equations. Vol. I}, 169–286, Handb. Differ. Equ., North-Holland, Amsterdam, 2004.\label{DiBUV}

    \bibitem{Guo} GUO, J-S. ``On the dead-core rates for a parabolic equation with strong absorption''. \textit{ Bulletin of the Institute of Mathematics Academia Sinica (New Series) }8 no.  4,  431-444, 2013.\label{Guo}

    \bibitem{KKPS} KARP, L., KILPEL\"{A}INEN, T. PETROSYAN, A. and SHAHGHOLIAN, H. ``On the porosity of free boundaries in degenerate variational inequalities''. \textit{J. Differential Equations} 164 (2000), no. 1, 110--117.\label{KKPS}

    \bibitem{PetrShah} PETROSYAN, A. and SHAHGHOLIAN, H. ``Parabolic obstacle problems applied to finance: free-boundary-regularity approach''. Editor D. Danielle, \textit{Recent Developments in Nonlinear Partial Differential Equations, Contemporary Matematics} 439, 2007, 117-133 (Appendix by T. Arnarson).\label{PetrShah}

    \bibitem{PT} PRAZERES, D. and TEIXEIRA, E.V. ``Asymptotics and regularity of flat solutions to fully nonlinear elliptic problems''. \textit{Ann. Sc. Norm. Super. Pisa Cl. Sci}. Vol. XV (2016), 485-500.\label{PT}

    \bibitem{Shah} SHAHGHOLIAN, H. ``Analysis of the Free Boundary for the $p$-parabolic variational problem $p \geq 2$''. \textit{ Rev. Mat. Iberoamericana,} 19 no. 3, 797-812, 2003.\label{Shah}

    \bibitem{Shah07} SHAHGHOLIAN, H. ``Free boundary regularity close to initial state for parabolic obstacle problem''. \textit{Trans. Amer. Math. Soc.} 360, 2077-2087, 2007.\label{Shah07}

    \bibitem{Sperb} SPERB, R. ``Some complementary estimates in the dead core problem''. \textit{ SIAM J. of Math. Analysis. Special Issue in Honor of I. Stalegold}. 1996\label{Sperb}

    \bibitem{Stak} STAKGOLD, I. ``Reaction-diffusion problems in chemical engineering''. \textit{Nonlinear diffusion problems} (Montecatini Terme, 1985), Lecture Notes in Math. 1224, Springer, Berlin, 1986, pp. 119-152.\label{Stak}

    \bibitem{Tei1} TEIXEIRA, E. V. ``Regularity for quasilinear equations on degenerate singular set''. \textit{Math. Ann}. 358 (2014), 241-256.\label{Tei1}

    \bibitem{Tei2}  TEIXEIRA, E. V. ``Geometric regularity estimates for elliptic equations''. \textit{Proceedings of the MCA 2013 Contemp. Math.} 656, 185-204, 2016.\label{Tei2}

    \bibitem{Tei4} TEIXEIRA, E.V. ``Regularity for the fully nonlinear dead-core problem''. \textit{Math. Ann.} 364, no. 3-4, 1121-1134, 2016. \label{Tei4}

    \bibitem{Tei5} TEIXEIRA, E.V. ``Nonlinear elliptic equations with mixed singularities''. \textit{Potential Anal}. 48 (2018), no. 3, 325–335.\label{Tei5}

    \bibitem{TU} TEIXEIRA, E. V. and URBANO, J. M. ``A geometric tangential approach to sharp regularity for degenerate evolution equations''. \textit{Anal. PDE.} 7 no. 3, 733-744, 2014. \label{TU}

    \bibitem{Urbano} URBANO, J. M. ``The Method of Intrinsic Scaling. A systematic approach to regularity for degenerate and singular PDEs. \textit{Lecture Notes in Mathematics, 1930}. Springer-Verlag, Berlin, 2008. x+150 pp. ISBN: 978-3-540-75931-7.\label{Urbano}

    \bibitem{Vaz} V\'{A}ZQUEZ, J. L. ``A strong maximum principle for some quasilinear elliptic equations''. \textit{Appl. Math. Optim. }12, 191-202, 1984.\label{Vaz}

    \bibitem{Z88} ZAJICEK, L. ``Porosity and $\sigma$-porosity'' .\textit{ Real Anal. Exchange} 13, 314-350, 1987.\label{Z88}
}

\end{thebibliography}
\end{document}